\definecolor{grey}{rgb}{.7,.7,.7}
\definecolor{blue}{rgb}{0,0,.8}
\definecolor{red}{rgb}{.8,0,0}
\definecolor{green}{rgb}{0,.4,0}
\definecolor{gold}{rgb}{0.8,0.6,0.1}
\definecolor{brown}{rgb}{0.8,0.4,0.1}
\def\md#1{\color{blue} {\ \textsc{MD: }}{ #1} \normalcolor}
\def\om#1{\color{red} {\ \textsc{OM: }}{ #1} \normalcolor}
\def\om#1{}
\def\md#1{}
\definecolor{blue3}{rgb}{.1,.0,.4}
\declaretheorem[name=Theorem,numberwithin=section]{thm} %Weird thing, the package thmtools ``requires'' it.
\newtheorem*{define*}{Definition}
\newtheorem{define}[thm]{Definition}
\newtheorem*{lemma*}{Lemma}
\newtheorem{lemma}[define]{Lemma}
\newtheorem{corollary}[define]{Corollary}
\newtheorem*{algorithm*}{Algorithm}
\newtheorem*{remark*}{Remark}
\newtheorem{remark}[define]{Remark}
\newtheorem*{prop*}{Proposition}
\newtheorem*{obs*}{Observation}
\newtheorem*{fact*}{Fact}
\newtheorem*{quest*}{Question}
\newtheorem*{conjecture*}{Conjecture}
\newtheorem*{question*}{Question}
\numberwithin{claimcounter}{define}
\newtheorem*{claim*}{Claim}
\numberwithin{equation}{section}
\newcommand{\R}{\mathbb{R}}
\newcommand{\Q}{\mathbb{Q}}
\newcommand{\lip}{\operatorname{Lip}}
\newcommand{\dist}{\operatorname{dist}}
\newcommand{\N}{\mathbb{N}}
\newcommand{\id}{\operatorname{id}}
\DeclareMathOperator{\diam}{diam}
\newcommand{\mc}[1]{\mathcal{#1}}
\newcommand{\inter}{\operatorname{Int}}
\newcommand{\abs}[1]{\left|#1\right|}
\newcommand{\lnorm}[2]{\left\|#2\right\|_{#1}}
\newcommand{\norm}[1]{\left\|#1\right\|}
\newcommand{\opnorm}[1]{\lnorm{\text{op}}{#1}}
\newcommand{\set}[1]{\left\{#1\right\}}
\newcommand{\cl}[1]{\overline{#1}}
\newcommand{\Gat}{G\^ ateaux }
\newcommand{\lipone}{\lip_{1}}
\newcommand{\Ball}[1]{\mathbb{B}_{#1}}
\DeclarePairedDelimiter{\inner}{\langle}{\rangle}
\def\XXint#1#2#3{{\setbox0=\hbox{$#1{#2#3}{\int}$ }
		\vcenter{\hbox{$#2#3$ }}\kern-.6\wd0}}
\newcommand{\supnorm}[1]{\lnorm{\infty}{#1}}
\newcommand{\mylabel}[2]{#2\def\@currentlabel{#2}\label{#1}}
\title{Typical Lipschitz mappings are typically non-differentiable.}
\author{Michael Dymond \and Olga Maleva}
\begin{document}
\maketitle
\begin{abstract}
	We prove that a typical Lipschitz mapping between any two Banach spaces is non-differentiable at typical points of any given subset of its domain in the most extreme form. This is a new result even for Lipschitz mappings between Euclidean spaces.
\end{abstract}

\section{Introduction}
The purpose of this note is to present a striking (non-)differentiablity property of typical Lipschitz mappings. Differentiability of Lipschitz mappings is the focus of mathematical research in an array of settings including Euclidean spaces (see e.g.~\cite{fitzpatrick1984differentiation}, \cite{Preiss_1990}, \cite{Dore_Maleva1}, \cite{preiss_speight2013}), Hilbert and Banach spaces (see e.g.~\cite{benyamini1998geometric}, \cite{LPT}, \cite{Dore_Maleva3}) and in geodesic metric spaces (see e.g.~\cite{Kirchheim94}, \cite{pinamonti_speight2017uds}). Moreover, the differentiability of typical Lipschitz mappings is specifically studied in the works \cite{preiss_tiser94}, \cite{Loewen_Wang_typical_2000}, \cite{dymond_maleva_2020}, \cite{dymond2019typical} and \cite{merlo}. However, despite the extent of research in this area, the property we prove in this paper appears to be missing from the existing literature. 

If we restrict our consideration to the real-valued case, we may recall that
the classical Rademacher's theorem guarantees that the set of non-differentiability points of 
a Lipschitz function defined on a finite-dimensional Euclidean space $\R^d$  is of
Lebesgue
measure zero,
and, more generally, a celebrated result by Preiss~\cite{Preiss_1990} says that any Lipschitz function defined on a Banach space $X$ with separable dual is differentiable on a dense subset of $X$.
As proof of the non-sharpness of Rademacher's Theorem, \cite[Corollary~6.5]{Preiss_1990} guarantees that for any $d\ge2$, there are null subsets $S$ of $\R^d$ (or even Hausdorff dimension one subsets of any Banach space with separable dual) which have the \textit{universal differentiability property}: any Lipschitz function defined on the space has points of differentiability in $S$. This result is further strengthened in~\cite{Dore_Maleva2,Dore_Maleva3,dymond_maleva2016} to show that null universal differentiability subsets (UDS) of $\R^d$ can be chosen to be compact, or at least closed and bounded of Hausdorff dimension $1$ in infinite-dimensional spaces with separable dual. The existence of UDS in spaces of dimension $d\ge2$ 	 contrasts with the fact that for any Lebesgue null subset $S$ of $\R$ there exists a Lipschitz 		   function whose set of non-differentiability points contains $S$, see~\cite{zahorski}.

On the other hand, in~\cite{preiss_tiser94} it is shown that given a null subset $S\subseteq\R$,
a \textit{typical} $1$-Lipschitz function $f\colon\R\to\R$ is nowhere differentiable on $S$  if and only if $S$ can be covered by a null $F_\sigma$ set. 
Moreover, if an analytic $S$ is not a subset of a null $F_\sigma$ set, a typical $1$-Lipschitz function has a point of differentiability in $S$.
In~\cite{dymond_maleva_2020} the authors of the present paper extend this dichtotomy of analytic subsets of $\R$ to all Euclidean spaces. In particular, \cite{dymond_maleva_2020} characterises those analytic subsets of $\R^{d}$ in which a typical Lipschitz function has points of differentiability.
The fact that there are closed null UDS in $\R^d$ if $d\ge2$ means that the existence of a null $F_\sigma$ cover needs to be replaced by a strictly stronger condition; in~\cite[Theorems~2.1~and~2.2]{dymond_maleva_2020} 
such a condition is given in terms of pure unrectifiability. 

According to \cite[Theorems~2.1 and 2.2]{dymond_maleva_2020}, 
the class of analytic sets in which a typical Lipschitz function finds points of differentiability is strictly larger than the class of UDS.
A finer analysis of such sets may consider the question of how many points of differentiability they provide for a typical Lipschitz function. However, in terms of category, the present work establishes that this set of differentiability points is always small. More precisely, we show that
for any given set $S\subseteq X$, 
a typical Lipschitz function is non-differentiable at a typical point of $S$.

Hence, this article further acts as an Erratum to~\cite[Remarks~2.9~and~3.18]{dymond_maleva_2020}. The authors hereby retract these two remarks, which are shown to be invalid by the present article. We further note that the errors referred to here are entirely contained within Remarks~3.18 and 2.9 in \cite{dymond_maleva_2020} and do not affect any of the results or indeed anything at all in the rest of the paper~\cite{dymond_maleva_2020}. 

\subsection{Main result.}\label{sec:typ}
Given a topological space $T$, we say that a typical element of $T$ possesses a certain property if the set of those elements of $T$ having that property is a residual subset of $T$. If $T$ is a complete metric space, then its residual subsets are dense in $T$, by the Baire Category theorem, hence a condition satisfied by a typical element is satisfied by elements of a dense $G_{\delta}$ subset.

It is therefore important to note that the term `typical' may have different meanings: the collection of typical objects depends on the ambient space, so when we say that a typical Lipschitz mapping satisfies a particular property, we need to clarify whether we consider the space of all Lipschitz mappings or only those with Lipschitz constant bounded by $L$ for certain $L>0$, whether the mappings in question are defined on the whole space or on its subset, and what topology (or metric) is used on the space of such Lipschitz mappings. Hence we need to take extra care as to what meaning we put into the term `a typical Lipschitz mapping'. We make this precise in subsection~\ref{sec:Lip}. 

Typical differentiability as an object of interest dates back to Banach's famous 1931 result \cite[Satz~1]{Banach1931} stating that a typical continuous function of an interval is nowhere differentiable. Inside the class of Lipschitz mappings between Euclidean spaces, Rademacher's theorem guarantees the existence of the derivative almost everywhere in the domain with respect to the Lebesgue measure. In the present article we prove that from a category point of view typical Lipschitz mappings remain nonetheless badly non-differentiable. More specifically, whilst Rademacher's theorem guarantees that the set of points of non-differentiability of any given Lipschitz mapping $\R^{d}\to\R^{l}$ has Lebesgue measure zero, we show that for the typical Lipschitz mapping, this set of non-differentiability points contains a $G_{\delta}$, dense subset of the domain. 
This result is known in the special case $l=1$, see~\cite[Theorem~4]{Loewen_Wang_typical_2000}; we cover all remaining Euclidean pairs $(\R^{d},\R^{l})$ as well as any more general Banach spaces. 
We point out that typical behaviour of real-valued Lipschitz mappings (the case $l=1$) has, in general, no direct implications for typical behaviour of vector-valued Lipschitz mappings (when $l>1$); see \cite[Theorem~6.1]{dymond_maleva_2020}.

We presently state the main result of this paper.
\begin{thm}\label{thm:typical_nonsep}
	Let $X$ and $Y$ be Banach spaces, $W$ be a separable subspace of $\mc{L}(X,Y)$, $Q$ be a closed and bounded subset of $X$ and let $G\subseteq \inter Q$. Then there is a residual subset $\mc{F}$ of $\lipone(Q,Y)$ such that for every $f\in \mc{F}$ the set
	\begin{equation*}
	\mc{N}_{f,W}:=\set{x\in G\colon \mc{D}_{f}(x)\supseteq \Ball{W}}
	\end{equation*}
	is residual in $G$.
\end{thm}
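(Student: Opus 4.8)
The plan is a Baire category argument in the complete metric space $\lipone(Q,Y)$ of $1$-Lipschitz maps $Q\to Y$ with the metric of uniform convergence, powered by a perturbation (``wiggle'') lemma. Recall that $\mc{D}_f(x)$ is the set of $L\in\mc{L}(X,Y)$ with $\liminf_{x'\to x,\,x'\ne x}\norm{f(x')-f(x)-L(x'-x)}/\norm{x'-x}=0$. Fix once and for all a countable set $\set{L_k\colon k\in\N}$ that is dense in $\Ball{W}$ and satisfies $\norm{L_k}<1$ for all $k$ (the open unit ball of $W$ is norm-dense in $\Ball{W}$ and $W$ is separable); set $\eta_k:=1-\norm{L_k}>0$. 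Call $f\in\lipone(Q,Y)$ \emph{$(k,m,n)$-good at $x$} if there is $x'\in Q$ with $0<\norm{x'-x}<\tfrac1n$ and $\norm{f(x')-f(x)-L_k(x'-x)}<\tfrac1m\norm{x'-x}$, and let $E_{k,m,n}(f)\subseteq G$ be the set of such $x$. A one-line triangle-inequality estimate shows $\mc{D}_f(x)$ is norm-closed, so the density of $\set{L_k}$ in $\Ball{W}$ gives $\mc{N}_{f,W}=\bigcap_{k,m,n}E_{k,m,n}(f)$; moreover each $E_{k,m,n}(f)$ is relatively open in $G$ (keep the witness $x'$ fixed and move $x$, the defining inequalities being strict). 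Hence it suffices to prove that $\mc{D}_{k,m,n}:=\set{f\colon E_{k,m,n}(f)\text{ is dense in }G}$ is residual in $\lipone(Q,Y)$ for every $k,m,n$: then $\mc{F}:=\bigcap_{k,m,n}\mc{D}_{k,m,n}$ is residual, and for $f\in\mc{F}$ the set $\mc{N}_{f,W}$ is a countable intersection of dense open subsets of $G$, hence residual in $G$.

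The engine is the following wiggle lemma. Given $f\in\lipone(Q,Y)$ that is $(1-\theta)$-Lipschitz, a ball with $\overline{B(z,r)}\subseteq\inter Q$, and $L$ with $\norm{L}\le 1-\eta$, set $\mu:=\min\set{\theta,\eta}$ and $\rho:=r\,e^{-2/\mu}$; then there is $g\in\lipone(Q,Y)$ with $g=f$ off $B(z,r)$, $\norm{g-f}_\infty\le 2r$, $g(z)=f(z)$, and $g(z+\rho u)=f(z)+\rho L(u)$ for any prescribed unit vector $u$. One keeps $g=f$ outside $B(z,r)$ and sets, on $\overline{B(z,r)}$,
\begin{equation*}
g(x):=f(z)+\bigl(1-\psi(\norm{x-z})\bigr)\bigl(f(x)-f(z)\bigr)+\psi(\norm{x-z})\,L(x-z),
\end{equation*}
where $\psi\colon[0,r]\to[0,1]$ equals $1$ on $[0,\rho]$, equals $0$ at $r$, and satisfies $\abs{\psi'(s)}\le\mu/(2s)$ (the budget $\int_\rho^r \mu/(2s)\,\dint s=1$ exactly suffices to bring $\psi$ down from $1$ to $0$). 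Since $g$ agrees with $f$ on $\partial B(z,r)$, a crossing-segment estimate makes the gluing with the exterior automatic, so the only point requiring work is that $g$ is $1$-Lipschitz on $\overline{B(z,r)}$. This is the step I expect to be the main obstacle: a crude product-rule bound loses an additive constant, and one must instead bound the upper radial dilation of $g$ along line segments, exploiting that $g(x)$ always lies in the ball about $f(z)$ of radius $(1-\mu)\norm{x-z}$ and that the logarithmic cut-off $\psi$ costs at most $\mu$ in Lipschitz constant, which is precisely absorbed by the slack $\theta$ built into $f$.

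Granting the wiggle lemma, I would realise each $\mc{D}_{k,m,n}$ as a superset of a countable intersection of open dense sets, so that no separability of $X$, $Y$ or $G$ is needed. For $l\in\N$ put $G_l:=\set{z\in G\colon\dist(z,X\setminus\inter Q)>1/l}$; these are open in $G$, increase to $G$, and a subset of $G$ is dense in $G$ if and only if its intersection with each $G_l$ is dense in $G_l$. For $j,l\in\N$ define
\begin{equation*}
\mc{W}_{k,m,n,j,l}:=\inter\set{h\in\lipone(Q,Y)\colon\ \forall z\in G_l\ \exists\, x^\ast\in E_{k,m,n}(h)\cap G_{2l}\text{ with }\norm{x^\ast-z}<\tfrac1j},
\end{equation*}
the interior being taken in $\lipone(Q,Y)$; this set is open, and since $G=\bigcup_l G_l$ one checks $\bigcap_{j,l}\mc{W}_{k,m,n,j,l}\subseteq\mc{D}_{k,m,n}$. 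So it remains to prove that each $\mc{W}_{k,m,n,j,l}$ is dense.

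Given $f_0\in\lipone(Q,Y)$ and $\eps>0$, first replace $f_0$ by $\tilde f_0:=f_0(x_0)+(1-\theta)(f_0-f_0(x_0))$ for a fixed $x_0\in Q$ and $\theta$ so small that $\norm{\tilde f_0-f_0}_\infty<\eps/2$ (possible as $Q$ is bounded); $\tilde f_0$ is $(1-\theta)$-Lipschitz. Choose a maximal $\tfrac1{3j}$-separated set $\set{z_\gamma\colon\gamma\in\Gamma}\subseteq G_{2l}$; by maximality every point of $G_{2l}$, hence every point of $G_l\subseteq G_{2l}$, lies within $\tfrac1{3j}$ of some $z_\gamma$, and each $z_\gamma$ satisfies $\dist(z_\gamma,X\setminus\inter Q)>1/(2l)$. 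Let $r_\ast$ be a fixed small fraction of $\min\set{1/n,1/j,1/l,\eps}$, small enough that the balls $B(z_\gamma,r_\ast)$ are pairwise disjoint and each $\overline{B(z_\gamma,r_\ast)}\subseteq\inter Q$; put $\mu:=\min\set{\theta,\eta_k}$ and $\rho_\ast:=r_\ast e^{-2/\mu}$. Apply the wiggle lemma with $L=L_k$, with $\tilde f_0$ in place of $f$, and with the same radius $r_\ast$ (hence the same $\rho_\ast$) at every $z_\gamma$, obtaining a map $g$ equal to $\tilde f_0$ off $\bigcup_\gamma B(z_\gamma,r_\ast)$; disjointness plus the crossing-segment estimate give $g\in\lipone(Q,Y)$. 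Then $\norm{g-f_0}_\infty<\eps$; $g$ is $(k,m,n)$-good at every $z_\gamma\in G_{2l}$, with witness $z_\gamma+\rho_\ast u_\gamma$ and with \emph{zero} error; and every $z\in G_l$ lies within $\tfrac1{3j}<\tfrac1j$ of such a $z_\gamma$. Because the error vanishes and the witness distance $\rho_\ast$ is the same for all $\gamma$, every $h\in\lipone(Q,Y)$ with $\norm{h-g}_\infty<\rho_\ast/(4m)$ is still $(k,m,n)$-good at every $z_\gamma$ using the same witnesses; thus the whole ball about $g$ of radius $\rho_\ast/(4m)$ lies in the set defining $\mc{W}_{k,m,n,j,l}$, so $g\in\mc{W}_{k,m,n,j,l}$. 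This gives density and completes the plan; separability was used only to produce the countable dense set $\set{L_k}\subseteq\Ball{W}$, the uniform (fixed-scale, zero-error) nature of the perturbation and the nested family $G_l\subseteq G_{2l}$ being precisely what keep the relevant subsets of $\lipone(Q,Y)$ open.
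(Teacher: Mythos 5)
There is one genuine error in your reduction, and it is in the very first step. You ``recall'' that $\mc{D}_f(x)$ is the set of $L$ with $\liminf_{x'\to x}\norm{f(x')-f(x)-L(x'-x)}/\norm{x'-x}=0$, but the paper's definition \eqref{eq:Dfx} is
\begin{equation*}
\liminf_{r\to 0+}\ \sup_{u\in\cl{B}_{X}(0_{X},r)}\frac{\lnorm{Y}{f(x+u)-f(x)-Lu}}{r}=0,
\end{equation*}
i.e.\ the affine approximation must be good \emph{uniformly on a whole ball} of some arbitrarily small radius, not merely along a sequence of individual points. These two conditions are far from equivalent: with your single-witness version, if $f\colon\R^d\to\R$ ($d\ge 2$) is differentiable at $x$ with derivative $D$, then every functional $L$ satisfies your condition (approach $x$ along the hyperplane $\ker(D-L)$), so your $\bigcap_{k,m,n}E_{k,m,n}(f)$ can contain points of differentiability and the identity $\mc{N}_{f,W}=\bigcap_{k,m,n}E_{k,m,n}(f)$ fails (only ``$\subseteq$'' holds). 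Since you prove residuality of the larger set $\bigcap_{k,m,n}E_{k,m,n}(f)$, this does not yield residuality of $\mc{N}_{f,W}$, and indeed the statement you literally prove is much weaker than the theorem (it does not even imply non-differentiability). The good news is that your own construction proves the correct statement: redefine $E_{k,m,n}(f)$ as the set of $x\in G$ for which there exists $r\in(0,1/n)$ with $\cl{B}_X(x,r)\subseteq Q$ and $\sup_{\lnorm{X}{u}\leq r}\lnorm{Y}{f(x+u)-f(x)-L_k u}<r/m$. Relative openness in $x$ survives (moving $x$ to $\tilde x$ changes the supremum by at most $2\lnorm{X}{\tilde x-x}$, and the inequality is strict), stability in $h$ survives (the supremum changes by at most $2\supnorm{h-g}$), and your wiggle lemma makes $g$ \emph{exactly} equal to $f(z)+L(\cdot-z)$ on the entire ball $\cl{B}(z,\rho_\ast)$, so the zero-error witness works at the level of the supremum over that ball. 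With this repair the rest of your argument goes through unchanged; you should also add the closedness of $\mc{D}_f(x)$ under operator-norm limits (an easy $\varepsilon/2$ argument, which the paper also needs) to pass from the countable dense family $\set{L_k}$ to all of $\Ball{W}$.

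Modulo that fix, your route is correct and genuinely different from the paper's. The paper plays the Banach--Mazur game separately for each fixed $L_n$ and perturbs $f$ by a two-stage radial surgery \emph{in the domain}: it first flattens $f$ near each point of a separated net by precomposing with a radial contraction of $X$, then inserts a slightly dilated copy of $L$ on a smaller ball, and finally rescales the whole map by $(s-\beta)/s$ to return to $\lipone$; because the surgery factors through a self-map of $X$, the gluing and Lipschitz estimates are inherited automatically. You instead interpolate \emph{in the target} between $f$ and the affine map via a logarithmic cut-off, paying for the interpolation with a pre-contraction $f\mapsto f(x_0)+(1-\theta)(f-f(x_0))$, and you replace the game by an explicit presentation of $\mc{F}$ as a countable intersection of open dense sets $\mc{W}_{k,m,n,j,l}$. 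Your Lipschitz verification is the delicate point, but it is sound: writing $g-f(z)=(1-\psi)F_1+\psi F_2$ with $F_1,F_2$ both $(1-\mu)$-Lipschitz and vanishing at $z$, the cross term is controlled by $\abs{\psi(\lnorm{X}{x-z})-\psi(\lnorm{X}{y-z})}\cdot 2\lnorm{X}{y-z}\leq\mu\lnorm{X}{x-y}$ thanks to $\abs{\psi'(s)}\leq\mu/(2s)$, and the budget $\tfrac{\mu}{2}\log(r/\rho)=1$ fixes $\rho=re^{-2/\mu}$. The price of your method is the exponentially small working scale $\rho_\ast$ and the heavier bookkeeping of the open sets; what it buys is a fully explicit $G_\delta$ witness for $\mc{F}$ and no appeal to the Banach--Mazur machinery.
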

Here $\Ball{W}$ is the closed unit ball of $W$ and $\mc{D}_{f}(x)$ is the collection
of all those mappings in the space $\mc{L}(X,Y)$ of bounded linear operators $X\to Y$ which, in a specific sense, behave like a derivative of $f$ at $x$; for the precise definition of $\mc{D}_{f}(x)$ see \eqref{eq:Dfx} below. The inclusion $\mc{D}_{f}(x)\supseteq \Ball{W}$ for non-trivial $W$ and points $x\in \mc{N}_{f,W}$ in Theorem~\ref{thm:typical_nonsep} implies, in particular, that $\mc{N}_{f,W}$ is contained in the set of G\^ateaux non-differentiability points of $f$. However, this condition should be interpreted as a very strong form of non-differentiability. We elaborate on this presently.

A particularly strong form of non-differentiability of a function at a point, studied in \cite[Theorem~1.9]{maleva_preiss2018}, occurs when many different linear mappings simultaneously behave like a derivative of the function. Note that if $f$ is G\^ateaux differentiable then either $\mc{D}_{f}(x)$ is empty, or it is the singleton set containing only the G\^ateaux derivative of $f$; if $f$ is 
Fr\'echet differentiable at $x$ we have $\mc{D}_{f}(x)=\set{Df(x)}$.
Accordingly, the size of the set $\mc{D}_{f}(x)$ provides a measure of the severity of non-differentiability of $f$ at $x$. In the case that $f$ is $1$-Lipschitz and $\mc{L}(X,Y)$ is separable (which is satisfied, for example, when $X$ and $Y$ are finite-dimensional), the most extreme form of non-differentiability of $f$ at $x$ occurs when $\mc{D}_{f}(x)$
equals the closed unit ball $\Ball{\mc{L}(X,Y)}$ of $\mc{L}(X,Y)$.

When $\mc{L}(X,Y)$ is non-separable the notion of strongest form of non-differentiability according to the size of $\mc{D}_{f}(x)$ is less clear. This is because, as we show in Lemma~\ref{lemma:Df_separable}, $\mc{D}_{f}(x)$ is always separable. Thus, in particular, if $\mc{L}(X,Y)$ is non-separable, then it is impossible to achieve $\mc{D}_{f}(x)=\Ball{\mc{L}(X,Y)}$. Qualitatively, the strongest form of non-differentiability of a $1$-Lipschitz $f$ that may occur in such case is $\mc{D}_{f}(x)=\Ball{W}$ for an infinite-dimensional, separable subspace of $\mc{L}(X,Y)$. In both the separable and non-separable case, the non-differentiability of typical Lipschitz mappings that we establish in our main result is of the strongest possible form described above.

In relation to Theorem~\ref{thm:typical_nonsep}, which applies to any pair $(X,Y)$ of Banach spaces, only partial information for a narrow subclass of Banach space pairs has been available. 
In fact, even for the class of Euclidean pairs $(X,Y)=(\R^{d},\R^{l})$, there are no comparable statements to Theorem~\ref{thm:typical_nonsep}.

For the special case where $X$ is finite dimensional and $Y=\R$ a variant of the non-differentiability of a typical Lipschitz function on a residual subset of the space is given by~\cite[Theorem~4]{Loewen_Wang_typical_2000}. 
	However, even in the settings of~\cite[Theorem~4]{Loewen_Wang_typical_2000} the result proved in this paper is stronger.
The examination, by Theorem~\ref{thm:typical_nonsep}, of the size of the set of non-differentiability points of a typical Lipschitz mapping inside an arbitrary subset $G$ of the domain is unprecedented in the literature. In particular,~\cite{Loewen_Wang_typical_2000}
requires $G$ to be equal to the whole space. This aspect of Theorem~\ref{thm:typical_nonsep} is crucial in order to refute the remarks of \cite{dymond_maleva_2020} discussed above. 
In the next corollary, we distinguish this one particular application of Theorem~\ref{thm:typical_nonsep}, although it applies of course to many other settings. The statement of Corollary~\ref{cor} directly refutes the Remarks~2.9 and~3.18 of \cite{dymond_maleva_2020}, as announced above.
\begin{corollary}\label{cor}
	Let $d\ge1$ and $S\subseteq(0,1)^d$ be arbitrary.
	Then there is a residual subset $\mc{F}$ of $\lipone([0,1]^d,\R)$ such that for every $f\in \mc{F}$ the set
	of non-differentiability points of $f$ in $S$ is residual in $S$.	
\end{corollary}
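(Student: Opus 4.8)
The plan is to read off Corollary~\ref{cor} as the special case of Theorem~\ref{thm:typical_nonsep} in which $X=\R^{d}$, $Y=\R$, $Q=[0,1]^{d}$, $G=S$ and $W=\mc{L}(\R^{d},\R)$ is the whole (finite-dimensional, hence trivially separable) space of bounded linear functionals on $\R^{d}$. First I would check that the hypotheses of the theorem are met with these choices: $Q=[0,1]^{d}$ is closed and bounded in $\R^{d}$; $\inter Q=(0,1)^{d}\supseteq S=G$; and $W$ is a separable subspace of $\mc{L}(X,Y)$. Theorem~\ref{thm:typical_nonsep} then supplies a residual subset $\mc{F}$ of $\lipone([0,1]^{d},\R)$ such that for each $f\in\mc{F}$ the set $\mc{N}_{f,W}=\set{x\in S\colon\mc{D}_{f}(x)\supseteq\Ball{W}}$ is residual in $S$.

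It remains to show that $\mc{N}_{f,W}$ consists of non-differentiability points of $f$; the corollary then follows because every subset of $S$ containing a residual subset of $S$ is itself residual in $S$. To this end, recall the description noted after the statement of Theorem~\ref{thm:typical_nonsep}: if $f$ is G\^ateaux differentiable at $x$ (in particular, if $f$ is differentiable at $x$ in the usual sense), then $\mc{D}_{f}(x)$ is either empty or the singleton $\set{Df(x)}$. For $d\ge1$ the closed unit ball $\Ball{W}$ of $W\cong\R^{d}$ contains more than one point, so the inclusion $\mc{D}_{f}(x)\supseteq\Ball{W}$ precludes differentiability of $f$ at $x$. Hence $\mc{N}_{f,W}$ is contained in, and therefore forces residuality in $S$ of, the set of non-differentiability points of $f$ in $S$.

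Since the deduction is essentially a translation of notation, I do not anticipate a genuine obstacle; the only point requiring care is the implication ``$\mc{D}_{f}(x)\supseteq\Ball{\mc{L}(\R^{d},\R)}$ implies $f$ is not differentiable at $x$'', which is immediate from the emptiness/singleton dichotomy for $\mc{D}_{f}(x)$ at differentiability points together with the observation that $\Ball{\mc{L}(\R^{d},\R)}$ has more than one element when $d\ge1$. One should also confirm that $\lipone([0,1]^{d},\R)$ and its subspaces are given the same topology in the corollary as in Theorem~\ref{thm:typical_nonsep} (namely, the one fixed in subsection~\ref{sec:Lip}), so that the word ``residual'' has a consistent meaning throughout; no restriction on $S$ is needed since $G$ in Theorem~\ref{thm:typical_nonsep} is an arbitrary subset of $\inter Q$.
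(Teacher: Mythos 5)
Your proposal is correct and is essentially the paper's own proof, which simply applies Theorem~\ref{thm:typical_nonsep} with $X=\R^d$, $Y=\R$, $W=\mc{L}(\R^d,\R)$, $Q=[0,1]^d$ and $G=S$. The extra step you spell out --- that $\mc{D}_{f}(x)\supseteq\Ball{W}$ rules out differentiability because $\mc{D}_{f}(x)$ is at most a singleton at differentiability points while $\Ball{\mc{L}(\R^d,\R)}$ has more than one element --- is exactly the observation the paper records in the discussion following Theorem~\ref{thm:typical_nonsep}.
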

\begin{proof}
	Apply Theorem~\ref{thm:typical_nonsep} to $X=\R^d$, $Y=\R$, $W=\mc{L}(\R^d,\R)$, $Q=[0,1]^d$ and $G=S$.
\end{proof}
Moreover, we note that Theorem~\ref{thm:typical_nonsep} further establishes a strong form of non-differentiability, stronger in particular than the emptiness of the Dini subgradient obtained by \cite[Theorem~4]{Loewen_Wang_typical_2000} in the restricted setting; see Lemma~\ref{lemma:Dini}. 
	
\section{Preliminaries and Notation.}
\subsection{General notation and differentiability notions.}
Given a normed vector space $X$, we let $\Ball{X}$ denote its closed unit ball and $\mathbb{S}_X$ its unit sphere. An open ball in $X$ with centre $x$ and radius $r$ will be written as $B_{X}(x,r)$ and for closed balls we write $\cl{B}_{X}$ instead of $B_{X}$. The origin in $X$ will be denoted by $0_{X}$. If $Y$ is an additional normed vector space, we let $\mc{L}(X,Y)$ denote the space of bounded linear operators $X\to Y$. The operator norm on $\mc{L}(X,Y)$ is denoted by $\opnorm{-}$. For a subset $E$ of a topological space, we let $\inter E$ denote the interior of $E$. A subset $\Gamma$ of a metric space $(M,d)$ will be called \emph{uniformly separated} if $\inf\set{d(x,y)\colon x,y\in\Gamma,\,x\neq y}>0$. For such a set $\Gamma$ and $s>0$, we call $\Gamma$ \emph{$s$-separated} if $\inf\set{d(x,y)\colon x,y\in\Gamma,\,x\neq y}\geq s$.

For a mapping $f\colon Q\subseteq X\to Y$ and $x\in \inter Q$ we let 
\begin{equation}\label{eq:Dfx}
\mc{D}_{f}(x):=\set{L\in \mc{L}(X,Y)\colon \liminf_{r\to 0+}\sup_{u\in\cl{B}_{X}(0_{X},r)}\frac{\lnorm{Y}{f(x+u)-f(x)-Lu}}
	{r}=0}.	
\end{equation}
Observe that if $f$ is $1$-Lipschitz, we have $\mc{D}_{f}(x)\subseteq \Ball{\mc{L}(X,Y)}$ for every $x\in \inter Q$. Moreover, the set $\mc{D}_{f}(x)$ and the standard notions of differentiability of $f$ at $x$ only make sense when $x$ belongs to the interior of the domain of $f$. We will never consider such notions for boundary points of the domain.

\subsection{Optimality of Theorem~\ref{thm:typical_nonsep}.}
Taking $W$ in Theorem~\ref{thm:typical_nonsep} as any non-trivial, separable subspace of $\mc{L}(X,Y)$ ensures that the set $\mc{N}_{f,W}$ is contained in the set of points of \Gat non-differentiability of $f$. Moreover, when $\mc{L}(X,Y)$ is itself separable, we may take $W=\mc{L}(X,Y)$. In this case we get that for a typical $f\in \lipone(Q,Y)$ we have that the set $\mc{D}_{f}(x)$ is maximal, as it is equal to $\Ball{W}=\Ball{\mc{L}(X,Y)}$, at a typical point $x$ of $G$. Moreover, in the following lemma we show that it is not possible to omit the separability condition on $W$ in Theorem~\ref{thm:typical_nonsep}. 

\begin{lemma}\label{lemma:Df_separable}
	Let $X$ and $Y$ be normed spaces, $Q\subseteq X$, $x\in \inter Q$ and $f\colon Q\to Y$ be a mapping. Then the set $\mc{D}_{f}(x)$ is separable.
\end{lemma}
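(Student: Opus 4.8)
The plan is to exhibit, by hand, a countable dense subset of $\mc{D}_{f}(x)$. I rely on the elementary fact that a metric space is separable once, for every $\eps>0$, it is a countable union of sets of diameter less than $\eps$ (fix, for each $n\in\N$, a point of the space in every nonempty member of a countable cover by sets of diameter $<1/n$; the union of all these points over all $n$ is a countable dense subset). So it is enough, for each fixed $n\in\N$, to cover $\mc{D}_{f}(x)$ by countably many subsets of $\mc{L}(X,Y)$ of operator-diameter $O(1/n)$. The difficulty is that $L\in\mc{D}_{f}(x)$ only supplies, for $L$ separately, \emph{some} small scale at which $L$ behaves like a derivative of $f$ at $x$, and these scales need not be comparable for different operators; the remedy is to slice the admissible scales into countably many blocks, so that two operators lying in the same block can be compared directly.

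Concretely, for $L\in\mc{L}(X,Y)$ and $r>0$ small enough that $\cl{B}_{X}(x,r)\subseteq Q$ (true for all small $r$, as $x\in\inter Q$), put
\[
	h_{L}(r):=\sup_{u\in\cl{B}_{X}(0_{X},r)}\frac{\lnorm{Y}{f(x+u)-f(x)-Lu}}{r},
\]
so that $\mc{D}_{f}(x)=\set{L\in\mc{L}(X,Y)\colon\liminf_{r\to 0+}h_{L}(r)=0}$. For every sufficiently large $m\in\N$ (so that $\cl{B}_{X}(x,1/m)\subseteq Q$) and every $n\in\N$, define
\[
	A_{m,n}:=\set{L\in\mc{L}(X,Y)\colon h_{L}(r)\leq\tfrac1n\ \text{for some }r\in[\tfrac1{m+1},\tfrac1m]}.
\]
I claim, first, that each $L\in\mc{D}_{f}(x)$ lies in some $A_{m,n}$ for every fixed $n$: from $\liminf_{r\to0+}h_{L}(r)=0$ there is an arbitrarily small $r$ with $h_{L}(r)\leq 1/n$, and any such $r$ falls into one of the intervals $[\tfrac1{m+1},\tfrac1m]$ with $m$ large. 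Secondly, I claim $\diam A_{m,n}\leq 3/n$: if $L,L'\in A_{m,n}$ have witnessing scales $r\leq r'$ in $[\tfrac1{m+1},\tfrac1m]$, then for every $u\in\cl{B}_{X}(0_{X},r)\subseteq\cl{B}_{X}(0_{X},r')$,
\[
	\lnorm{Y}{Lu-L'u}\leq\lnorm{Y}{f(x+u)-f(x)-Lu}+\lnorm{Y}{f(x+u)-f(x)-L'u}\leq\tfrac{r}{n}+\tfrac{r'}{n},
\]
so, using $\opnorm{L-L'}=r^{-1}\sup_{u\in\cl{B}_{X}(0_{X},r)}\lnorm{Y}{Lu-L'u}$ and $r'/r\leq(m+1)/m\leq 2$, we obtain $\opnorm{L-L'}\leq 3/n$. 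Combining the two claims: for each $n$, $\mc{D}_{f}(x)$ is covered by the countably many sets $A_{m,n}$, each of diameter at most $3/n$, which is precisely what is required.

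The only step that is not bookkeeping is the diameter estimate, and the point to isolate there is \emph{why} the discretisation helps: two operators in a common block $[\tfrac1{m+1},\tfrac1m]$ are compared on the smaller of their two witnessing balls, where both are uniformly within $O(1/n)$ of the \emph{single} fixed map $u\mapsto f(x+u)-f(x)$, and the operator norm of a linear map is recovered from its restriction to any ball. I stress that the whole argument uses no regularity of $f$ whatsoever — not continuity, not a Lipschitz bound, not measurability — so the lemma holds for an entirely arbitrary mapping $f\colon Q\to Y$.
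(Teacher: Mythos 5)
Your proof is correct. It turns on the same key estimate as the paper's: two operators that both approximate the single map $u\mapsto f(x+u)-f(x)$ to within $O(1/n)\cdot r$ on a common ball $\cl{B}_{X}(0_{X},r)$ are within $O(1/n)$ of each other in operator norm, since $\opnorm{-}$ is recovered from the restriction to any ball. The packaging, however, is genuinely different. The paper exhibits an explicit countable dense subset of $\mc{D}_{f}(x)$: for each rational scale $q$ and each $n\in\N$ it selects a near-minimiser $L_{q,n}$ of the approximation error at scale $q$, and shows that any $L_{0}\in\mc{D}_{f}(x)$ whose error at scale $q$ is at most $\eps/3$ must satisfy $\opnorm{L_{q,n}-L_{0}}\leq\eps$; the two operators are thus always compared at the \emph{identical} scale $q$. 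You avoid selecting distinguished approximants and instead invoke the criterion that a metric space admitting, for every $\eps>0$, a countable cover by sets of diameter less than $\eps$ is separable; the price is the discretisation of scales into blocks $[\tfrac{1}{m+1},\tfrac{1}{m}]$ and the comparability factor $r'/r\leq 2$, which is where your constant $3/n$ comes from. Both routes are equally elementary and equally general (neither uses any regularity of $f$). One cosmetic point: your sets $A_{m,n}$ live in $\mc{L}(X,Y)$ rather than in $\mc{D}_{f}(x)$, so to apply the criterion to the space $\mc{D}_{f}(x)$ one should pass to $A_{m,n}\cap\mc{D}_{f}(x)$ and mark points there; your phrase ``a point of the space in every nonempty member'' already handles this, but it is worth making explicit.
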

\begin{proof}
	Let $r>0$ be small enough so that $B_{X}(x,r)\subseteq Q$. For each rational $q\in \Q\cap (0,r)$ and each $n\in\N$ choose $L_{q,n}\in\Ball{\mc{L}(X,Y)}$ such that
	\begin{multline*}
	\sup_{u\in \cl{B}_{X}(0_{X},q)}\frac{\lnorm{Y}{f(x+u)-f(x)-L_{q,n}u}}{q}\\
	\leq \inf_{L\in\mc{L}(X,Y)}\sup_{u\in \cl{B}_{X}(0_{X},q)}\frac{\lnorm{Y}{f(x+u)-f(x)-Lu}}{q}+\frac{1}{n}.
	\end{multline*}
	We show that the set $\set{L_{q,n}\colon q\in\Q\cap (0,r),\,n\in\N}$ is dense in $\mc{D}_{f}(x)$. 
	
	Indeed, consider arbitrary $L_0\in \mc{D}_{f}(x)$ and $\varepsilon>0$. Choose $q\in\Q\cap (0,r)$ so that
	\begin{equation*}
	\sup_{u\in\cl{B}_{X}(0,q)}\frac{\lnorm{Y}{f(x+u)-f(x)-L_0u}}{q}\leq \frac{\varepsilon}{3}
	\end{equation*}
	Next, fix any $n\in \N$ such that $n\geq \frac{3}{\varepsilon}$. Then, for every $u\in \cl{B}_{X}(0,q)$ we have
	\begin{align*}
	\lnorm{Y}{(L_{q,n}-L_0)u}&\leq \lnorm{Y}{L_{q,n}u+f(x)-f(x+u)}+\lnorm{Y}{f(x+u)-f(x)-L_0u}\\
	&\leq \frac{\varepsilon q}{3}+\frac{q}{n}+\frac{\varepsilon q}{3}\leq \varepsilon q,
	\end{align*}
	which implies $\opnorm{L_{q,n}-L_0}\leq \varepsilon$.
\end{proof}

Let us record a simple comparison, in the case of real-valued $f$, of the set $\mc{D}_{f}(x)$ with the Dini subgradient $\hat{\partial}f(x)$ of $f$ at $x$, considered in \cite{Loewen_Wang_typical_2000}. Let $f\colon X\to \R$ be a function, $x\in X$ and for each $v\in X$ consider the lower Dini directional derivative
	\begin{equation*}
	f_{+}(x;v):=\liminf_{t\to 0+}\frac{f(x+tv)-f(x)}{t}.
	\end{equation*}
	The Dini subgradient of $f$ at $x$ is then defined by
	\begin{equation*}
	\hat{\partial}f(x):=\set{x^{*}\in X^{*}\colon f_{+}(x;v)\geq \inner{x^{*},v}\,\forall v\in X}.
	\end{equation*}
\begin{lemma}\label{lemma:Dini}
		Let $X$ be a normed space, $f\colon X\to \R$ be a $1$-Lipschitz function, $x,v\in X$ and $y^ {*},z^{*}\in \mc{D}_{f}(x)$ 
be such that
$z^{*}(v)<0<y^{*}(v)$. Then $\hat{\partial}f(x)=\emptyset$.
	\end{lemma}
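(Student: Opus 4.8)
The plan is to show that the two hypotheses force both lower Dini directional derivatives $f_{+}(x;v)$ and $f_{+}(x;-v)$ to be strictly negative, which is already incompatible with the existence of any Dini subgradient. Observe first that $v\neq 0_{X}$, since $z^{*}(v)<0$; hence $\norm{v}>0$ and rescaling by $\norm{v}$ causes no trouble.

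The main step is to extract from membership in $\mc{D}_{f}(x)$ precise information about the one-sided difference quotients of $f$ at $x$. Since $z^{*}\in\mc{D}_{f}(x)$, the vanishing of the lower limit in \eqref{eq:Dfx} supplies a sequence $r_{n}\to 0^{+}$ and numbers $\eps_{n}\to 0^{+}$ such that $\sup_{\norm{u}\le r_{n}}\abs{f(x+u)-f(x)-z^{*}u}\le \eps_{n}r_{n}$. I would apply this with the admissible vector $u=(r_{n}/\norm{v})\,v$, which has norm exactly $r_{n}$, and then divide by $t_{n}:=r_{n}/\norm{v}\to 0^{+}$, obtaining $\abs{(f(x+t_{n}v)-f(x))/t_{n}-z^{*}(v)}\le \eps_{n}\norm{v}$. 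Since $f_{+}(x;v)$ is a lower limit taken over all $t\to 0^{+}$, letting $n\to\infty$ gives $f_{+}(x;v)\le z^{*}(v)<0$. An entirely symmetric computation applied to $y^{*}\in\mc{D}_{f}(x)$, now using the perturbation vectors $u=-(s_{m}/\norm{v})\,v$ along a (possibly different) sequence of scales $s_{m}\to 0^{+}$ provided by \eqref{eq:Dfx}, yields $f_{+}(x;-v)\le -y^{*}(v)<0$.

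Finally I would argue by contradiction. Suppose $x^{*}\in\hat{\partial}f(x)$. Testing the defining inequality of the Dini subgradient at the vectors $v$ and $-v$ gives $f_{+}(x;v)\ge \inner{x^{*},v}$ and $f_{+}(x;-v)\ge \inner{x^{*},-v}=-\inner{x^{*},v}$, and adding these shows $f_{+}(x;v)+f_{+}(x;-v)\ge 0$. But the previous step gives $f_{+}(x;v)+f_{+}(x;-v)\le z^{*}(v)-y^{*}(v)<0$, a contradiction. Hence $\hat{\partial}f(x)=\emptyset$.

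I do not anticipate a genuine obstacle: the argument is short and essentially forced once the correct pairing of functionals with directions is spotted — namely $z^{*}$ with $v$ and $y^{*}$ with $-v$, the opposite pairing producing no contradiction. The only points requiring mild care are verifying that the chosen perturbation vectors indeed lie in the closed balls $\cl{B}_{X}(0_{X},r_{n})$, respectively $\cl{B}_{X}(0_{X},s_{m})$, so that the estimates of \eqref{eq:Dfx} apply verbatim, and bearing in mind that the two instances of $\mc{D}_{f}(x)$ a priori furnish different sequences of scales — which is precisely why the argument must pass through $f_{+}(x;v)$ and $f_{+}(x;-v)$ separately rather than attempting to evaluate $y^{*}$ and $z^{*}$ along one common sequence. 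The $1$-Lipschitz hypothesis is not essential to the reasoning; it merely fixes the ambient class and, incidentally, ensures $f_{+}(x;\pm v)\in[-\norm{v},\norm{v}]$, so that no infinite values arise.
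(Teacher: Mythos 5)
Your proof is correct and follows essentially the same route as the paper's: both arguments reduce to showing $f_{+}(x;v)\le z^{*}(v)<0$ and $f_{+}(x;-v)\le y^{*}(-v)<0$ and then noting that no $x^{*}$ can satisfy the subgradient inequality in both directions $v$ and $-v$. You merely spell out the extraction of the sequence of scales from the $\liminf$ in \eqref{eq:Dfx} and phrase the final contradiction by adding the two inequalities, which the paper leaves implicit.
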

	\begin{proof}
		Observe that $z^{*}\in\mc{D}_{f}(x)$ and $z^{*}(v)<0$ implies $f_{+}(x;v)\leq z^{*}(v)<0$. Similarly $y^{*}\in\mc{D}_{f}(x)$ and $y^{*}(-v)<0$ implies $f_{+}(x;-v)<0$. Thus, we have $f_{+}(x;v)<0$ and $f_{+}(x;-v)<0$, which implies $\hat{\partial}f(x)=\emptyset$.
\end{proof}
\begin{remark}

	Note that, by the Hahn-Banach theorem, Lemma~\ref{lemma:Dini} may be applied whenever $\mc{D}_{f}(x)\supseteq \mathbb{S}_{X^{*}}$, the unit sphere of $X^*$. In this situation, given any $v\in X\setminus\set{0_X}$, we may choose the functional $y^{*}\in \mathbb{S}_{X^{*}}$ such that $\lnorm{X^{*}}{y^*}=\inner{y^{*},\frac{v}{\lnorm{X}{v}}}=1$ and then set $z^{*}=-y^{*}$.
\newline
It can be easily seen from the example of $f(x)=-\norm{x}\colon X\to\R$ that a $1$-Lipschitz function may have
$\mc{D}_f(0)=\hat{\partial}f(0)=\emptyset$. Hence, there is no reverse implication to the statement of Lemma~\ref{lemma:Dini}: emptiness of the Dini subgradient $\hat{\partial}f(x)$ does not imply any type of largeness of the set $\mc{D}_{f}(x)$.
\end{remark}

\subsection{Lipschitz mappings.}\label{sec:Lip}
Given metric spaces $(Q,d_{Q})$, $(Y,d_{Y})$, we denote by $\lipone(Q,Y)$ the set of Lipschitz mappings $f\colon Q\to Y$ with $\lip(f)\leq 1$.  

If $Q$ is a bounded metric space, then $\lipone(Q,Y)$ becomes a metric space when equipped with the metric $\rho\colon\lipone(Q,Y)\times\lipone(Q,Y)\to[0,\infty)$ defined by
\begin{equation*}
\rho(f,g)=\sup\set{d_{Y}(f(x),g(x))\colon x\in Q}.
\end{equation*}
Moreover, if both $Q$ and $Y$ are complete, then the metric space $(\lipone(Q,Y),\rho)$ is also complete. If $Q$ is compact and $Y$ is separable then the metric space $(\lipone(Q,Y),\rho)$ is separable, see~\cite[Thm~4.19]{kechris2012classical}. 

In what follows, we will only consider the case where $Q$ is a closed and bounded subset of a Banach space $X$ and $Y$ is a Banach space. In this case the metric $\rho$ is given by 
\begin{equation*}
\rho(f,g)=\rho_Q(f,g)=\supnorm{f-g}=\sup\set{\lnorm{Y}{g(x)-f(x)}\colon x\in Q},
\end{equation*}
and makes $(\lipone(Q,Y),\rho_Q)$ a complete metric space. The latter is needed, as mentioned in subsection~\ref{sec:typ},
in order for residual subsets of $\lipone(Q,Y)$ to be dense in $\lipone(Q,Y)$.

Note that if $Q\subseteq X$ is not bounded, one could still consider the space $\lipone(Q,Y)$ 
as a complete metric space with metric
\[
\tilde\rho(f,g)=\sum_{n=1}^{\infty}2^{-n}\min\set{1,\rho_{Q\cap n\Ball{X}}(f,g)}. 
\]
This is the approach chosen in \cite{Loewen_Wang_typical_2000}. In the present work, we elect to work only with bounded $Q$ in order to be consistent with the papers \cite{preiss_tiser94} and \cite{dymond_maleva_2020}. However, we note that
the proofs given in the present paper may be easily 
modified to obtain the same results for $\lipone(Q,Y)$ in the case $Q$ is unbounded.
\subsection{The Banach-Mazur Game}\label{sec:BMgame}

To prove that a set is residual, i.e.\ a complement of the set of first Baire category, we will make use of the Banach-Mazur game.

Given a topological space $T$ and its subset $H\subseteq T$, the Banach-Mazur game in $T$ with target $H$ is played by two players, Player~I and Player~II, as follows: The game starts by Player~I selecting a non-empty open subset $U_{1}$ of $T$. Player~II must then respond by nominating a non-empty open subset $V_{1}$ of $T$ with $V_{1}\subseteq U_{1}$. In the $k$-th round of the game, with $k\geq 2$, Player~I chooses a non-empty open set $U_{k}\subseteq V_{k-1}$ and Player~II returns a non-empty open set $V_{k}\subseteq U_{k}$. Thus, a run of the game is described by an infinite sequence of open sets
\begin{equation*}
U_{1}\supseteq V_{1}\supseteq U_{2}\supseteq V_{2}\supseteq \ldots\supseteq U_{k}\supseteq V_{k}\supseteq\ldots,
\end{equation*}
where the sets $U_{k}$ are the choices of Player~I and the sets $V_{k}$ are those of Player~II. Player~II wins the game if 
\begin{equation*}
\bigcap_{k\in\N}V_{k}\subseteq H.
\end{equation*}
Otherwise Player~I wins.

The Banach-Mazur game can be used to determine whether a subset of a topological space is residual. More precisely, for any non-empty topological space $T$ and any subset $H$ of $T$ it holds that $H$ is a residual subset of $T$ if and only if Player~II has a winning strategy in the Banach Mazur game in $T$ with target set $H$; see \cite[Thm~8.33]{kechris2012classical}.

In the case that $T$ is a metric space (as will be the case in our setting), open balls may be used in place of the open sets $U_{k}$ and $V_{k}$ above, see also~\cite[Theorem~3.16]{dymond_maleva_2020}. Thus, the moves of Player~I and Player~II effectively become a choice of a pair $(x,r)$ where $x\in T$ prescibes the centre of the ball and $r>0$ the radius. 

\section{Proof of Theorem~\ref{thm:typical_nonsep}}
In this section we prove Theorem~\ref{thm:typical_nonsep}. For the proof of subsequent auxiliary lemmata we follow the convention that the infimum of the empty set is $+\infty$. We also note that in any normed space a bounded non-empty subset has a non-empty boundary.

The next lemma is a generalisation of \cite[Lemma~3.1]{mynewpaper} for normed spaces instead of convex sets.
\begin{lemma}\label{lemma:general}
	Let $X$ and $Z$  be normed spaces, $0<a<b$, and let $f_1,f_2\colon X\to Z$ be Lipschitz mappings such that $\lip(f_1)+\lip(f_2)\le1$ and $f_1(0_{X})=f_2(0_{X})=0_{Z}$. 
	Then there exists a mapping $\Phi=\Phi(a,b,f_1,f_2)\colon X\to Z$ such that
	\begin{enumerate}[(i)]
		\item\label{Phi1a} $\Phi(x)=f_1(x)$ for all $x\in  \cl{B}_{X}(0_{X},a)$.
		\item\label{Phi2a} $\Phi(x)=f_2(x)$ for all $x\in X\setminus B_{X}(0_{X},b)$.
		\item\label{Phi3a} $\lip(\Phi)\leq 1+\frac{a}{b-a}$.
		\item \label{Phi5}
		If $f_1=0_{Z}$ is the constant $0_{Z}$ mapping, then 
		$\lnorm{Z}{\Phi(x)-f_2(x)}\le a\lip(f_2)$ for all $x\in X$. 
		\item\label{Phi4a}
		If $f_2=0_{Z}$ is the constant $0_{Z}$ mapping, then
		$\lnorm{Z}{\Phi(x)}\leq b\lip(f_1)$ for all $x\in X$.
	\end{enumerate}
	
\end{lemma}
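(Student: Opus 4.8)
The plan is to construct $\Phi$ by an explicit interpolation formula that depends only on the norm $\norm{x}$ of the argument, gluing $f_1$ on the inner ball to $f_2$ on the outer region via a radial cut-off. More precisely, I would set $\Phi(x)=f_1(x)$ for $\norm{x}\le a$, $\Phi(x)=f_2(x)$ for $\norm{x}\ge b$, and for $a<\norm{x}<b$ define
\begin{equation*}
\Phi(x)=\lambda(x)f_1\!\br*{\frac{a\,x}{\norm{x}}}+(1-\lambda(x))f_2\!\br*{\frac{b\,x}{\norm{x}}},\qquad \lambda(x)=\frac{b-\norm{x}}{b-a}.
\end{equation*}
Here the key idea is that on the annulus we do not evaluate $f_1,f_2$ at $x$ itself but at the radial projections onto the spheres of radii $a$ and $b$, where the values have already been fixed; this makes the boundary matching \eqref{Phi1a} and \eqref{Phi2a} immediate since $\lambda=1$ on $\norm{x}=a$ and $\lambda=0$ on $\norm{x}=b$, and since $f_i(0_X)=0_Z$ it also behaves well if $a$ is small.

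The main work is the Lipschitz estimate \eqref{Phi3a}. I would split into cases according to where the two points $x,y$ lie. The cases with both points in the inner ball or both in the outer region are trivial ($\lip\le \lip(f_i)\le 1$), and a case where one point is inside and the other outside can be reduced to the annulus case by inserting intermediate points on the spheres $\norm{\cdot}=a$, $\norm{\cdot}=b$, using that the segment from $x$ to $y$ crosses these spheres and the norm is $1$-Lipschitz. The genuine estimate is for $x,y$ both in the open annulus. There I would write $\Phi(x)-\Phi(y)$ as a sum of three differences: the change coming from $\lambda$, the change in $f_1\br*{ax/\norm{x}}$, and the change in $f_2\br*{bx/\norm{x}}$. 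For the first I use that $f_1\br*{ax/\norm x}$ and $f_2\br*{bx/\norm x}$ differ by at most $a\lip(f_1)+b\lip(f_2)\le b$ in norm (bounding $f_i$ via $f_i(0_X)=0_Z$ and the triangle inequality with the radial point), combined with $\abs{\lambda(x)-\lambda(y)}\le\norm{x-y}/(b-a)$. For the second and third I need that $x\mapsto ax/\norm{x}$ is Lipschitz on the annulus with constant at most $2a/\norm{x}\le 2$, but more carefully I would use the sharper bound that the radial projection onto a sphere of radius $\rho$ is $\bigl(\rho/\min(\norm x,\norm y)\bigr)$-Lipschitz, so the contribution is at most $\lip(f_1)\cdot a/\norm{x}\cdot(\ldots)+\lip(f_2)\cdot b/\norm{x}\cdot(\ldots)$; summing and using $\lip(f_1)+\lip(f_2)\le 1$ together with $\norm{x}\ge a$ I expect to land exactly on $1+\frac{a}{b-a}$. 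Getting the constants in this step to match the claimed bound, rather than a weaker $2+\cdots$, is the part I would expect to be fiddly, and it is plausible the intended construction uses a slightly different radial gluing (e.g.\ matching the nearest point to $x$ on the sphere, or interpolating along the segment to $0_X$) that makes the constant come out cleanly; I would first try the formula above and adjust if the bookkeeping forces it.

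Finally, properties \eqref{Phi5} and \eqref{Phi4a} should fall out directly from the formula. If $f_1=0_Z$, then for $\norm{x}\le a$ we have $\Phi(x)=0_Z$ and $\norm{\Phi(x)-f_2(x)}=\norm{f_2(x)}\le \lip(f_2)\norm{x}\le a\lip(f_2)$; for $\norm{x}\ge b$ we have $\Phi(x)=f_2(x)$ so the difference is $0$; and on the annulus $\Phi(x)-f_2(x)=(1-\lambda(x))\br*{f_2\br*{bx/\norm x}-f_2(x)}-\lambda(x)f_2(x)+\br*{(1-\lambda(x))-1}\cdot 0$, which I would bound using $\norm{f_2\br*{bx/\norm x}-f_2(x)}\le \lip(f_2)(b-\norm x)$ and $\norm{f_2(x)}\le \lip(f_2)\norm x$, so that the total is at most $\lip(f_2)\bigl[(1-\lambda)(b-\norm x)+\lambda\norm x\bigr]$, and a short check that this quantity is at most $a\lip(f_2)$ on $a\le\norm x\le b$ (the bracket is maximised at $\norm x=a$, giving value $a$). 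Symmetrically, if $f_2=0_Z$ then for $\norm x\ge b$ we get $\Phi(x)=0_Z$, for $\norm x\le a$ we get $\norm{\Phi(x)}=\norm{f_1(x)}\le a\lip(f_1)\le b\lip(f_1)$, and on the annulus $\norm{\Phi(x)}=\lambda(x)\norm{f_1\br*{ax/\norm x}}\le \lambda(x)\,a\lip(f_1)\le b\lip(f_1)$, as required. These verifications are routine once the formula is pinned down, so the entire difficulty of the proof is concentrated in the single Lipschitz estimate \eqref{Phi3a}.
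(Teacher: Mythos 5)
There is a genuine gap: the explicit formula you propose does not satisfy the Lipschitz bound \eqref{Phi3a}, and the ``sharper bound'' you invoke for the radial projection is false in a general normed space. The retraction $x\mapsto \rho x/\lnorm{X}{x}$ onto the sphere of radius $\rho$ is only $2\rho/\min(\lnorm{X}{x},\lnorm{X}{y})$-Lipschitz, and the factor $2$ is sharp (it is attained, e.g., in $\ell_\infty^2$); it is not $\rho/\min(\lnorm{X}{x},\lnorm{X}{y})$-Lipschitz. This is not a bookkeeping issue that can be absorbed: take $X=\R^2$ with the $\ell_\infty$ norm, $Z=\R$, $f_1(x_1,x_2)=x_2$, $f_2=0_Z$, $a=1$, $b=10$, so the target constant is $1+\frac{a}{b-a}=\frac{10}{9}$. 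Your formula gives $\Phi(x)=\frac{10-\lnorm{\infty}{x}}{9}\cdot\frac{x_2}{\lnorm{\infty}{x}}$ on the annulus, and for $x=(1,1)$, $y_h=(1+h,1-h)$ one computes
\begin{equation*}
\frac{\abs{\Phi(x)-\Phi(y_h)}}{\lnorm{\infty}{x-y_h}}=\frac{19-h}{9(1+h)}\xrightarrow[h\to0^+]{}\frac{19}{9}>\frac{10}{9},
\end{equation*}
so \eqref{Phi3a} fails. Property \eqref{Phi5} also fails for your formula: your claim that $(1-\lambda)(b-\lnorm{X}{x})+\lambda\lnorm{X}{x}$ is maximised at $\lnorm{X}{x}=a$ is false (at $\lnorm{X}{x}=\frac{a+b}{2}$ it equals $\frac{b}{2}>a$ once $b>2a$), and one can realise the failure concretely by a McShane extension of suitable prescribed values of $f_2$ at $0_X$, $x$ and $bx/\lnorm{X}{x}$.

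The paper's construction avoids the radial projection entirely: on the annulus it sets
\begin{equation*}
\Phi(x)=\frac{b-\lnorm{X}{x}}{b-a}\,f_1(x)+\frac{b\bigl(\lnorm{X}{x}-a\bigr)}{\lnorm{X}{x}\,(b-a)}\,f_2(x),
\end{equation*}
i.e.\ it evaluates $f_1,f_2$ at $x$ itself and puts all the radial dependence into the scalar coefficients (which are chosen to be $1$ and $0$ on the inner sphere and $0$ and $1$ on the outer sphere, but are \emph{not} a partition of unity in between). When $f_1,f_2$ are linear this coincides with your formula, which is presumably why your formula looks right; for general Lipschitz $f_1,f_2$ only the paper's version has the stated Lipschitz constant, because the only price paid for the interpolation is the variation of the scalar coefficients, controlled via $\lnorm{Z}{f_i(x)}\le\lip(f_i)\lnorm{X}{x}$, and the two contributions combine against $\lip(f_1)+\lip(f_2)\le1$. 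Your boundary-crossing reduction and the overall case split are fine, but the core of the proof --- the annulus estimate --- needs this different formula.
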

\begin{proof}
	Define $\Phi\colon X\to Z$ by 
	\begin{equation*}
		\Phi(x)=\begin{cases}
			f_1(x) & \text{ if }x\in  \cl{B}_{X}(0_{X},a),\\
			\displaystyle\frac{b-\lnorm{X}{x}}{b-a}f_1(x)+
			\frac{b\bigl(\lnorm{X}{x}-a\bigr)}{\lnorm{X}{x}(b-a)}f_2(x)
			& \text{ if }x\in B_{X}(0_{X},b)\setminus \cl{B}_{X}(0_{X},a),\\
			f_2(x) & \text{ if } x\in X\setminus B_{X}(0_{X},b).
		\end{cases}
	\end{equation*}
	Clearly, $\Phi$ satisfies~\eqref{Phi1a} and~\eqref{Phi2a}.
	Observe that $\Phi$ is a continuous mapping $X\to Z$. 
	Moreover, since $f_1,f_2\in\lipone(X,Z)$, in order to check~\eqref{Phi3a}, it is enough to verify 
	\begin{equation}\label{eq:Lip_bound}
		\lnorm{Z}{\Phi(y)-\Phi(x)}\le {\left(1+\frac{a}{b-a}\right)}\lnorm{X}{y-x}
	\end{equation}
	for any $x,y\in B_{X}(0_{X},b)\setminus \cl{B}_{X}(0_{X},a)$.
	To show this inequality, fix $x,y\in B_{X}(0_{X},b)\setminus \cl{B}_{X}(0_{X},a)$ and note first that
	\begin{multline}\label{eq.Phi-b}
		\lnorm{Z}{\Phi(y)-\Phi(x)}\\
		\\
		\le
		\lnorm{Z}{
			\frac{b-\lnorm{X}{y}}{b-a}f_1(y)-
			\frac{b-\lnorm{X}{x}}{b-a}f_1(x)
		}
		+
		\lnorm{Z}{
			\frac{b\bigl(\lnorm{X}{y}-a\bigr)}{\lnorm{X}{y}(b-a)}f_2(y)
			-
			\frac{b\bigl(\lnorm{X}{x}-a\bigr)}{\lnorm{X}{x}(b-a)}f_2(x)
		}.
	\end{multline}	
	The first term of~\eqref{eq.Phi-b} is  bounded above by 
	\begin{align*}	
		&\frac{b-\lnorm{X}{y}}{b-a}
		\lnorm{Z}{
			f_1(y)-f_1(x)}
		+
		\abs{\frac{b-\lnorm{X}{y}}{b-a}-\frac{b-\lnorm{X}{x}}{b-a}}
		\lnorm{Z}
		{f_1(x)}
		\\
		\\
		\le
		&\frac{b-\lnorm{X}{y}}{b-a}
		\lip(f_1)\lnorm{X}{y-x}
		+
		\frac{\lnorm{X}{y-x}}{b-a}\lip(f_1)\lnorm{X}{x}
		\\
		\\
		=
		&\frac{b-\lnorm{X}{y}+\lnorm{X}{x}}{b-a}\lip(f_1)\lnorm{X}{y-x}
		\\
		\\
		\le
		&\left(1+\frac{a}{b-a}\right)\lip(f_1)\lnorm{X}{y-x}
		,
	\end{align*}	
	as we may assume, without loss of generality, that $\lnorm{X}{y}\ge\lnorm{X}{x}$. We also note that in the first inequality we used  $f_1(0_X)=0_Z$ to deduce $\lnorm{Z}{f_{1}(x)}\le\lip(f_1)\lnorm{X}{x}$.
	
	The second term of~\eqref{eq.Phi-b} is bounded above by
	\begin{align*}
		&\frac{b\bigl(\lnorm{X}{y}-a\bigr)}{\lnorm{X}{y}(b-a)}
		\lnorm{Z}{f_2(y)-f_2(x)}+
		\abs{\frac{b\bigl(\lnorm{X}{y}-a\bigr)}{\lnorm{X}{y}(b-a)}-
			\frac{b\bigl(\lnorm{X}{x}-a\bigr)}{\lnorm{X}{x}(b-a)}
		}\lnorm{Z}{f_2(x)}\\
	\\
		\le 
		&\frac{b\bigl(\lnorm{X}{y}-a\bigr)}{\lnorm{X}{y}(b-a)}
		\lip(f_2)\lnorm{X}{y-x}
		+\frac{ab}{\lnorm{X}{x}\lnorm{X}{y}(b-a)}\lnorm{X}{y-x}\lip(f_2)\lnorm{X}{x}\\
		\\
		=
		&\left(1+\frac{a}{b-a}\right)\lip(f_2)\lnorm{X}{y-x},
	\end{align*}
	where we again used $\lnorm{Z}{f_2(x)}\le\lip(f_2)\lnorm{X}{x}$ due to $f_2(0_X)=0_Z$.
	
	Summing the derived upper bounds for the two terms of \eqref{eq.Phi-b} establishes \eqref{eq:Lip_bound}, so~\eqref{Phi3a} follows.
	
	Finally, 
	if $f_1=0_{Z}$, then for any $x\in B_X(0_X,b)\setminus \overline{B}_X(0_X,a)$
	\[
	\lnorm{Z}{\Phi(x)-f_2(x)}
	=
	\frac{a\bigl|\lnorm{X}{x}-b\bigr|}{(b-a)\lnorm{X}{x}}\lnorm{Z}{f_2(x)}
	\le a\lip(f_2),
	\]
	for any $x\in \cl B_X(0_X,a)$
	\[
	\lnorm{Z}{\Phi(x)-f_2(x)}
	=
	\lnorm{Z}{f_2(x)}
	\le\lip(f_2)\lnorm{X}{x}
	\le
	a\lip(f_2)
	\]
	and for any $x\in X\setminus B_X(0_X,b)$
	\[
	\lnorm{Z}{\Phi(x)-f_2(x)}
	=0,
	\]
	which verifies~\eqref{Phi5}.

	If, however,
	$f_2=0_{Z}$, then for any $x\in B_X(0_X,b)\setminus \overline{B}_X(0_X,a)$ 
	\[
	\lnorm{Z}{\Phi(x)}=
	\frac{b-\lnorm{X}{x}}{b-a}\lnorm{Z}{f_1(x)}
	\le
	\lip(f_1)\lnorm{X}{x}
	\le b\lip(f_1)
	,
	\]
	and for 
	any $x\in \overline{B}_X(0_X,a)$
	\[
	\lnorm{Z}{\Phi(x)}=
	\lnorm{Z}{f_1(x)}\le 
	a\lip(f_1)\le b\lip(f_1).
	\]
	This verifies~\eqref{Phi4a} as the inequality trivially holds for $x\in X\setminus B_X(0,b)$.
\end{proof}
The following lemma provides a construction which will be used to define a winning strategy for Player~II in the Banach-Mazur game in Lemma~\ref{lemma:BMgame1}. 
The property~\eqref{eq:g_formula} of $g$ ensures that this new $1$-Lipschitz mapping ``sees'' $L$ as its derivative in a small neighbourhood of the given set $\Gamma$.
\begin{lemma}\label{lemma:PlayerII_nonsep-1}
	Let $X$ and $Y$ be normed spaces and $Q\subseteq X$ be a bounded, closed set with $\inter Q\neq \emptyset$.  Let $r\in (0,1)$, $L\in\mc{L}(X,Y)$ with $\opnorm{L}\leq 1-r$ and $f\in \lipone(Q,Y)$.  Let $\emptyset\ne\Gamma\subseteq \inter Q$ be a uniformly separated set with 
	\begin{equation}\label{eq:Gamma-cond-1}
		\inf_{x\in\Gamma}\dist_{X}(x,\partial Q)>0. 
	\end{equation}
	Then there exist $\alpha\in(0,r)$ and $g\in\lipone(Q,Y)$ such that $\supnorm{g-f}<r$ and
	\begin{equation}\label{eq:g_formula}
		g(x+u)=g(x)+Lu\qquad\text{for all }u\in \cl{B}_{X}(0_{X},\alpha)\text{ and all }x\in\Gamma.
	\end{equation}
\end{lemma}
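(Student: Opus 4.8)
The plan is to build $g$ from $f$ by two modifications. First, create a uniform Lipschitz slack by passing to $\tilde f:=(1-\delta)f$ for a small $\delta>0$: this is $(1-\delta)$‑Lipschitz and, for $\delta$ small, uniformly close to $f$ on the bounded set $Q$. Second, inside small pairwise disjoint balls centred at the points of $\Gamma$, overwrite $\tilde f$ so that near each centre $x$ it becomes the affine map $v\mapsto\tilde f(x)+Lv$, interpolating back to $\tilde f$ on a surrounding annulus. Both the affine ``bump'' and the interpolation are produced by Lemma~\ref{lemma:general}, which is precisely a device for transitioning between two maps of small total Lipschitz constant: the slack in $\opnorm L\le 1-r$ is what lets the affine bump be inserted $1$‑Lipschitzly, and the slack $\delta$ is what lets the transition back to $\tilde f$ stay $1$‑Lipschitz. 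It is essential that $g$ need \emph{not} coincide with $f$ away from $\Gamma$; one cannot in general interpolate from an affine map back to $f$ itself while remaining $1$‑Lipschitz, which is exactly why $f$ is first shrunk.

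Concretely, I would set $s:=\inf\{\lnorm X{x-y}\colon x\neq y\in\Gamma\}>0$ (uniform separation), $d_0:=\inf_{x\in\Gamma}\dist_X(x,\partial Q)>0$ and $M:=\sup_{y\in Q}\lnorm Y{f(y)}<\infty$, and then fix, in this order, $\delta\in(0,1)$ with $\delta(M+s)<r$; $b_2\in\bigl(0,\min\{s/2,d_0,1\}\bigr)$; $a_2\in(0,\delta b_2]$; $b_1\in(0,a_2)$; and finally $\alpha:=a_1\in(0,rb_1]$, so that $a_1\le rb_1<b_1<a_2\le\delta b_2<b_2<1$ and in particular $\alpha<r$. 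The closed balls $\cl{B}_{X}(x,b_{2})$, $x\in\Gamma$, are then pairwise disjoint and contained in $\inter Q$. For each $x\in\Gamma$ I would invoke Lemma~\ref{lemma:general} twice (with $Z=Y$): once with $(a,b)=(a_1,b_1)$, $f_1\colon v\mapsto Lv$ and $f_2\equiv 0_Y$ (admissible, as $\lip(f_1)+\lip(f_2)=\opnorm L\le 1-r\le 1$), yielding $\Phi_1^x$ with $\Phi_1^x(v)=Lv$ on $\cl{B}_{X}(0_{X},a_{1})$ by~\eqref{Phi1a}, $\Phi_1^x\equiv 0_Y$ off $B_X(0_X,b_1)$ by~\eqref{Phi2a}, $\lnorm Y{\Phi_1^x(v)}\le b_1\opnorm L\le b_1$ by~\eqref{Phi4a}, and $\lip(\Phi_1^x)\le 1$ thanks to the choice $a_1\le rb_1$; and once with $(a,b)=(a_2,b_2)$, $f_1\equiv 0_Y$ and $f_2\colon v\mapsto(1-\delta)\bigl(f(x+v)-f(x)\bigr)$ (admissible, as $\lip(f_1)+\lip(f_2)\le 1-\delta\le 1$; only the restriction of this $f_2$ to $\cl{B}_{X}(0_{X},b_{2})$, equivalently of $f$ to $\cl{B}_{X}(x,b_{2})\subseteq\inter Q$, enters the construction), yielding $\Phi_2^x$ with $\Phi_2^x\equiv 0_Y$ on $\cl{B}_{X}(0_{X},a_{2})$ by~\eqref{Phi1a}, $\Phi_2^x(v)=(1-\delta)(f(x+v)-f(x))$ for $\lnorm Xv=b_2$ by~\eqref{Phi2a}, $\lnorm Y{\Phi_2^x(v)-(1-\delta)(f(x+v)-f(x))}\le a_2$ by~\eqref{Phi5}, and $\lip(\Phi_2^x)\le 1$ thanks to the choice $a_2\le\delta b_2$. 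Both Lipschitz bounds $\le 1$ rest on the estimate behind Lemma~\ref{lemma:general}\eqref{Phi3a}, namely $\lip(\Phi)\le\bigl(1+\tfrac a{b-a}\bigr)\bigl(\lip(f_1)+\lip(f_2)\bigr)$, which its proof establishes. I would then define $g\colon Q\to Y$ by $g(y):=(1-\delta)f(x)+\Phi_1^x(y-x)+\Phi_2^x(y-x)$ when $y\in\cl{B}_{X}(x,b_{2})$ for the (necessarily unique) such $x\in\Gamma$, and $g(y):=(1-\delta)f(y)$ otherwise.

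It then remains to verify the three conclusions. Property~\eqref{eq:g_formula} holds with this $\alpha=a_1$: for $\lnorm Xv\le a_1$ we have $\Phi_1^x(v)=Lv$ and $\Phi_2^x(v)=0_Y$, hence $g(x+v)=(1-\delta)f(x)+Lv=g(x)+Lv$. The two defining formulae for $g$ agree on each sphere $\lnorm X{y-x}=b_2$ (there $\Phi_1^x=0_Y$ and $\Phi_2^x(y-x)=(1-\delta)(f(y)-f(x))$), so $g$ is well defined and continuous. For $\supnorm{g-f}<r$: off the balls $\lnorm Y{g(y)-f(y)}=\delta\lnorm Y{f(y)}\le\delta M$, while on $\cl{B}_{X}(x,b_{2})$ one computes $g(y)-f(y)=-\delta f(y)+\Phi_1^x(y-x)+\bigl(\Phi_2^x(y-x)-(1-\delta)(f(y)-f(x))\bigr)$, of norm at most $\delta M+b_1+a_2\le\delta M+2\delta b_2\le\delta(M+s)<r$.

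The remaining point, $\lip(g)\le 1$, is where the real work lies, and I expect it to be the main obstacle. Off the balls, $g=(1-\delta)f$ is $(1-\delta)$‑Lipschitz. On a single ball $\cl{B}_{X}(x,b_{2})$, the sum $\Phi_1^x+\Phi_2^x$ is $1$‑Lipschitz: since $b_1<a_2$, every point of $\cl{B}_{X}(0_{X},b_{2})$ has a neighbourhood on which one of the two summands is identically $0_Y$ ($\Phi_2^x$ where $\lnorm Xv<a_2$, $\Phi_1^x$ where $\lnorm Xv>b_1$), so $\Phi_1^x+\Phi_2^x$ is locally $1$‑Lipschitz on the convex set $\cl{B}_{X}(0_{X},b_{2})$ and hence $1$‑Lipschitz there. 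Finally, because the balls $\cl{B}_{X}(x,b_{2})$ ($x\in\Gamma$) are convex and pairwise disjoint, any segment in $Q$ meets only finitely many of them (using $s$‑separation of $\Gamma$ together with boundedness of $Q$) and meets each in a sub‑segment whose endpoints lie on the bounding sphere, where $g=(1-\delta)f$; telescoping the piecewise $1$‑Lipschitz estimates along such a segment gives $\lnorm Y{g(y)-g(y')}\le\lnorm X{y-y'}$ for all $y,y'\in Q$. Apart from this gluing argument and the bookkeeping that makes the constants $\delta,b_2,a_2,b_1,a_1$ mutually consistent, everything reduces to Lemma~\ref{lemma:general}.
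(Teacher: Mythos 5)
Your construction is correct and shares the paper's architecture: both proofs invoke Lemma~\ref{lemma:general} twice around each point of $\Gamma$ to build a map that is affine with derivative $L$ on an innermost ball, passes through a locally constant stage, and returns to (a copy of) $f$ on an outer annulus, with a global rescaling absorbing the Lipschitz overshoot. The implementations differ. The paper first collapses $f$ to a constant near each $x\in\Gamma$ by composing with a radial self-map of $X$ (so the Lipschitz estimate is a product and the geometry of $Q$ never intervenes), inserts a dilated operator $T=\frac{s}{s-\beta}L$, and rescales only at the end; you rescale $f$ to $(1-\delta)f$ at the outset and then add a compactly supported bump, spending the slack $\opnorm{L}\le 1-r$ on the inner transition and the slack $\delta$ on the outer one, which lets you insert $L$ itself and skip the paper's normalisation $0_Y\in f(\Gamma)$ at the harmless price of letting $\delta$ depend on $\sup_{y\in Q}\lnorm{Y}{f(y)}$.

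Three points in your write-up need more care, none fatal. First, your bounds $\lip(\Phi_1^x)\le1$ and $\lip(\Phi_2^x)\le1$ rest on the refined estimate $\lip(\Phi)\le\bigl(1+\frac{a}{b-a}\bigr)\bigl(\lip(f_1)+\lip(f_2)\bigr)$, which the proof of Lemma~\ref{lemma:general}~\eqref{Phi3a} does yield (and which also follows from homogeneity, since $\Phi(a,b,L,0)=L\circ\Phi(a,b,\id_X,0)$ and $\Phi(a,b,0,(1-\delta)h)=(1-\delta)\Phi(a,b,0,h)$), but is not its statement. Second, your second application of the lemma feeds in an $f_2$ defined only on $\cl{B}_{X}(0_{X},b_2)$; this is fine because only those values enter the formula for $\Phi_2^x$ on that ball, but it should be said explicitly. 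Third, and most substantively, $Q$ is not assumed convex, so for $y,y'\in Q$ the segment $[y,y']$ need not lie in $Q$ and the phrase ``any segment in $Q$'' does not literally cover the cross-region case. The telescoping nevertheless goes through: the only intermediate points at which $g$ is evaluated are the crossings of the spheres $\partial B_{X}(x,b_2)$, which do lie in $Q$ because the closed balls sit inside $\inter Q$, and between consecutive crossings $w,w'$ one needs only $\lnorm{Y}{(1-\delta)f(w)-(1-\delta)f(w')}\le(1-\delta)\lnorm{X}{w-w'}$, which holds for arbitrary $w,w'\in Q$ without any reference to the segment between them. With these repairs the proof is complete.
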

\begin{proof}
	The approach we take to modify the mapping $f$ to arrive at $g$ is similar to that taken in \cite[Lemma~3.3]{mynewpaper}.
	
	The conclusion of this lemma is valid for $f$ if and only if it is valid for any mapping of the form $f+p$, where $p\colon Q\to Y$ is a constant mapping. Therefore, we may assume that $0_{Y}\in f(\Gamma)$. Lipschitz mappings $h\colon Q\to Y$ with the property $0_{Y}\in h(Q)$ satisfy $\lnorm{Y}{h(x)}\leq \lip(h)\diam Q$ for all $x\in Q$. This fact will be used later in the proof.
	
	Fix $s\in (0,1)$ small enough so that $\Gamma$ is $4s$-separated and the infimum of \eqref{eq:Gamma-cond-1} is at least $4s$. Let 
	\begin{equation}\label{eq:beta0}
		\beta=\beta(r,s,Q)\in (0,s/2)
	\end{equation}
	be a parameter depending only on $r, s$ and $Q$ which will be determined at the end of the proof in \eqref{eq:alpha_beta}. We define first a mapping $g_{0}\colon Q\to Y$ by
	\begin{equation*}
		g_{0}(z)=\begin{cases}
			f(z) & \text{ if }z\in Q\setminus \bigcup_{x\in\Gamma}B_{X}(x,s),\\
			f(x+\Phi(z-x)) & \text{ if }z\in B_{X}(x,s)\text{ and }x\in\Gamma,
		\end{cases}
	\end{equation*}
	where $\Phi:=\Phi(\beta,s,0_{X},\id_X)\colon X\to X$ is the mapping given by Lemma~\ref{lemma:general} applied to $X$, $Z=X$, $a=\beta$, $b=s$, $f_{1}=0_{X}$ (the constant mapping $X\to X$ with value $0_{X}$) and $f_{2}=\id_{X}$. 
	Using again $\beta<s$ and Lemma~\ref{lemma:general}~\eqref{Phi1a}, we note that
	\begin{equation}\label{eq:g0_const}
		g_{0}(z)=g_{0}(x)=f(x)\qquad \text{whenever }
		x\in \Gamma\text{ and }z\in \cl{B}_{X}(x,\beta).
	\end{equation}
	It follows, in particular, that
	\[
	0_{Y}\in f(\Gamma)=g_{0}(\Gamma).
	\]
	Further, we note that Lemma~\ref{lemma:general}~\eqref{Phi3a} and~\eqref{Phi5} imply
	\begin{equation*}
		\lip(g_{0})\leq 1+\frac{\beta}{s-\beta}
		\qquad 
		\text{and}
		\qquad 
		\supnorm{g_{0}-f}\leq \beta.
	\end{equation*}

	Let $T=T(r,s,Q,L)\in\mc{L}(X,Y)$ be a linear operator 
	with $\opnorm{T}\leq 1$. This operator will be used in the construction of the target $1$-Lipschitz mapping $g$ such that~\eqref{eq:g_formula} is satisfied with a multiple of $T$ instead of $L$; this will determine how $T$ is defined, see~\eqref{eq:choice_T}. The choice of $T$ depends on $L,s$ and $\beta=\beta(r,s,Q)$, and we note that $L,Q,r,s$ are fixed from the start.

	For each $x\in \Gamma$ we define a mapping $h_{x}\colon X\to Y$ by
	\begin{equation*}
		h_{x}(z)=g_{0}(x)+T(z-x),\qquad z\in X.
	\end{equation*}
	Note that $\lip(h_{x})= \opnorm{T}\leq 1$ for every $x\in \Gamma$. Next we let $\alpha=\alpha(r,s,Q)\in (0,\beta)$ be a further parameter to be determined later in the proof in \eqref{eq:alpha_beta} and define $g_{1}\colon Q\to Y$ by
	\begin{equation*}
		g_{1}(z)=\begin{cases}
			g_{0}(z), & \text{ if }z\in Q\setminus \bigcup_{x\in\Gamma}B_{X}(x,\beta),\\
			h_{x}(x+\Psi(z-x)), & \text{ if }z\in B_{X}(x,\beta)\text{ and }x\in \Gamma,
		\end{cases}
	\end{equation*}
	where $\Psi:=\Phi(\alpha,\beta,\id_{X},0_{X})\colon X\to X$ is the mapping given by Lemma~\ref{lemma:general} applied to $X$, $Z=X$, $a=\alpha$, $b=\beta$, $f_{1}=\id_{X}$ and $f_{2}=0_{X}$. The properties of $\Psi$ and \eqref{eq:g0_const} ensure that $g_{1}$ is continuous. With the continuity of $g_{1}$ established, we may estimate its Lipschitz constant as
	\begin{multline*}
		\lip(g_{1})\leq \max\set{\lip(g_{0}),\left(1+\frac{\alpha}{\beta-\alpha}\right)}\\
		\leq\max\set{\left(1+\frac{\beta}{s-\beta}\right), \left(1+\frac{\alpha}{\beta-\alpha}\right)}\leq 1+\frac{\beta}{s-\beta}
		=\frac{s}{s-\beta},
	\end{multline*}
	where the penultimate inequality is achieved by imposing the condition
	\begin{equation}\label{eq:alpha_condition}
		\alpha\leq \frac{\beta^{2}}{s}.
	\end{equation} 
	Moreover, we have 
	\begin{equation*}
		\supnorm{g_{1}-g_{0}}\leq \beta\qquad\text{and}\qquad 0_{Y}\in g_{0}(\Gamma)=g_{1}(\Gamma).
	\end{equation*}
	To verify the former, observe, using \eqref{eq:g0_const}, $\opnorm{T}\leq 1$, $\lip(\text{id}_X)=1$ and Lemma~\ref{lemma:general}~\eqref{Phi4a}, that for any $z\in B_{X}(x,\beta)$ with $x\in \Gamma$ 
	\begin{multline*}
		\lnorm{Y}{g_1(z)-g_0(z)}
		=\lnorm{Y}{g_0(x)+T(\Psi(z-x))-g_0(z)}\\
		=\lnorm{Y}{T(\Psi(z-x))}
		\leq \lnorm{X}{\Psi(z-x)}
		\leq  \beta.
	\end{multline*}
	Finally we set 
	\begin{equation*}
		g_{2}=\frac{s-\beta}{s}\cdot g_{1},
	\end{equation*}
	so that $g_{2}\in\lipone (Q,Y)$ and
	\begin{equation*}
		\supnorm{g_{2}-g_{1}}\leq \frac{\beta \lip(g_{1})\diam Q}{s}\leq \frac{\beta \diam Q}{s-\beta}\leq \frac{2\beta \diam Q}{s},
	\end{equation*}
	using $\beta<s/2$ from~\eqref{eq:beta0}.
	Setting $g:=g_{2}\in \lipone(Q,Y)$, we conclude that
	\begin{equation*}
		\supnorm{g-f}\leq \supnorm{g_{2}-g_{1}}+\supnorm{g_{1}-g_{0}}+\supnorm{g_{0}-f}
		\leq \frac{2\beta\diam Q}{s}+2\beta\leq \frac{4\beta(1+\diam Q)}{s}.
	\end{equation*}
	Thus, we achieve $\supnorm{g-f}\leq r$ by imposing the condition
	\begin{equation}\label{eq:beta_condition1}
		\beta \leq \frac{rs}{4(1+\diam Q)}.
	\end{equation}
	We are now ready to make the choice of linear operator $T=T(r,s,Q,L)\in \mc{L}(X,Y)$ with $\opnorm{T}\leq 1$. Indeed, the choice 
	\begin{equation}\label{eq:choice_T}
		T=\frac{s}{s-\beta}L, 
	\end{equation}
	establishes~\eqref{eq:g_formula}. 
	We note that the condition \eqref{eq:beta_condition1} imposed on $\beta$ implies $\beta\leq rs$, which together with $\opnorm{L}\leq 1-r$ gives $\opnorm{T}\leq 1$. 
	
	It only remains to note that the choices
	\begin{equation}\label{eq:alpha_beta}
		\beta =\frac{rs}{4(1+\diam Q)},\qquad \alpha=\frac{r^{2}s}{16(1+\diam Q)^{2}}
	\end{equation}
	satisfy the required conditions~\eqref{eq:beta0}, \eqref{eq:alpha_condition} and~\eqref{eq:beta_condition1}.
\end{proof}

\begin{lemma}\label{lemma:Gamma1}
	Let $X$ be a normed space, $Q$ be a bounded subset of $X$ and $G\subseteq \inter Q$. Then there exists a sequence $(\Gamma_{k})_{k\in\N}$ of nested sets $\Gamma_k\subseteq \Gamma_{k+1}\subseteq G$ such that the union $\bigcup_{k\ge 1}\Gamma_k$ is dense in $G$ and each set $\Gamma_{k}$ satisfies the hypothesis of Lemma~\ref{lemma:PlayerII_nonsep-1}, that is, 
	$\Gamma_k$ satisfies~\eqref{eq:Gamma-cond-1} and is $\delta_k$-separated for some $\delta_k>0$.
\end{lemma}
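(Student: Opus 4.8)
The plan is to build the sets $\Gamma_k$ directly from a fixed countable dense subset of $G$, intersecting with "safety shells" that keep us a definite distance away from $\partial Q$, and then thinning out to guarantee separation. Since $G\subseteq\inter Q$ and $X$ is a metric space, $G$ is separable, so fix a sequence $(x_j)_{j\in\N}$ which is dense in $G$ with each $x_j\in G$. For each $j$ let $\rho_j:=\dist_X(x_j,\partial Q)>0$ (recall from the excerpt that a bounded non-empty set has non-empty boundary, so this is a genuine positive number whenever $\partial Q\neq\emptyset$; if $\partial Q=\emptyset$ then $Q=X$, $\inter Q=X$, and the distance-to-boundary condition~\eqref{eq:Gamma-cond-1} is vacuous by the convention that $\inf\emptyset=+\infty$, so that case is trivial).

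First I would fix a decreasing sequence $\eta_k\downarrow 0$, say $\eta_k=2^{-k}$, and set $A_k:=\set{x_j\colon \dist_X(x_j,\partial Q)\geq \eta_k}$. Each $A_k$ automatically satisfies~\eqref{eq:Gamma-cond-1} with infimum at least $\eta_k$, the $A_k$ are nested increasing, and $\bigcup_k A_k=\set{x_j\colon j\in\N}$ is dense in $G$ (every point of $G$, being interior to $Q$, has positive distance to $\partial Q$, hence lies in some $A_k$; and the $x_j$ themselves are dense). The remaining issue is that $A_k$ need not be uniformly separated. To fix this, I would choose, for each $k$, a finite subset $\Gamma_k\subseteq A_k$ by a greedy/inductive procedure: enumerate $A_k=\set{y_1,y_2,\dots}$ and add $y_i$ to $\Gamma_k$ precisely when $\lnorm{X}{y_i-y}> \eta_k/2$ for every $y$ already selected; take $\Gamma_k$ to be the union over the first $k$ stages of this selection — more carefully, one runs the greedy selection on $A_k$ with separation threshold $\delta_k:=\eta_k/2$ but only looking at the first $N_k$ points $x_1,\dots,x_{N_k}$, choosing $N_k\uparrow\infty$ fast enough that the selected set is $\delta_k/2$-dense in $\set{x_1,\dots,x_{N_k}}\cap A_k$.

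The key points to verify are then: (a) $\Gamma_k$ is $\delta_k$-separated by construction, with $\delta_k>0$; (b) $\Gamma_k\subseteq A_k\subseteq\inter Q$ and $\inf_{x\in\Gamma_k}\dist_X(x,\partial Q)\geq\eta_k>0$, giving~\eqref{eq:Gamma-cond-1}; (c) nestedness $\Gamma_k\subseteq\Gamma_{k+1}$, which I would arrange by making the greedy selection at stage $k+1$ start from the set $\Gamma_k$ already chosen (this is legitimate since $\Gamma_k\subseteq A_k\subseteq A_{k+1}$ and $\Gamma_k$ is $\delta_k$-separated, hence $\delta_{k+1}$-separated as $\delta_{k+1}<\delta_k$, so the greedy invariant is maintained — one just needs $\delta_{k+1}\le\delta_k$, true for $\eta_k=2^{-k}$); and (d) density of $\bigcup_k\Gamma_k$ in $G$. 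For (d), fix $x\in G$ and $\eps>0$; pick $k$ with $\eta_k<\dist_X(x,\partial Q)$ and $\delta_k<\eps/2$, and pick $x_j$ with $\lnorm{X}{x-x_j}<\min\set{\eps/4,\dist_X(x,\partial Q)-\eta_k}$, so $x_j\in A_k$; then for $k'\geq k$ large enough that $x_j$ is among the first $N_{k'}$ points, the greedy selection guarantees some point of $\Gamma_{k'}$ within $\delta_{k'}/2\le\delta_k/2<\eps/4$ of $x_j$, hence within $\eps$ of $x$.

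The main obstacle is purely bookkeeping: coordinating the nestedness requirement with the separation/density requirements across all $k$ simultaneously, i.e.\ ensuring that the greedy selection at level $k+1$ both extends $\Gamma_k$ and remains $\delta_{k+1}$-separated while still becoming dense. The clean way to handle this is to run a single greedy procedure once on the enumeration $(x_j)$ with a separation threshold that is allowed to shrink in stages, rather than running $k$ independent procedures; with $\eta_k=2^{-k}$ and $\delta_k=2^{-k-1}$ everything lines up, and no genuine difficulty remains beyond writing it carefully. There is no hard analytic content here — the lemma is a standard "extract a nested, separated, boundary-avoiding, dense family" construction — so I would keep the write-up short and lean on the monotonicity of the $\eta_k$ and the elementary covering property of greedy $\delta$-nets.
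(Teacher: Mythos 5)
Your construction is essentially the paper's in spirit, but it rests on a false premise: you assert that ``since $G\subseteq\inter Q$ and $X$ is a metric space, $G$ is separable.'' Subsets of metric spaces need not be separable, and the lemma is stated for an arbitrary normed space $X$ (indeed the main theorem is explicitly intended to cover non-separable Banach spaces; only the subspace $W$ of $\mc{L}(X,Y)$ is assumed separable). For instance, if $X=\ell^{\infty}$ and $Q=\Ball{X}$, then $G=\inter Q$ contains an uncountable $1$-separated set and admits no countable dense sequence $(x_j)$, so the greedy-on-an-enumeration construction has nothing to run on. Note also that Lemma~\ref{lemma:PlayerII_nonsep-1} does not require $\Gamma$ to be countable or finite, so there is no need to force countability in the first place.

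The repair is exactly what the paper does: set $G_k=\set{x\in G\colon\dist_X(x,\partial Q)\geq 2^{-k}}$ (your $A_k$, but taken inside $G$ itself rather than inside a countable dense subset), and choose $\Gamma_k$ inductively to be a \emph{maximal} $2^{-k}$-separated subset of $G_k$ containing $\Gamma_{k-1}$; such a maximal set exists by Zorn's lemma, and the extension is legitimate because $\Gamma_{k-1}$ is $2^{-(k-1)}$-separated, hence $2^{-k}$-separated, and $G_{k-1}\subseteq G_k$. Maximality immediately makes $\Gamma_k$ a $2^{-k}$-net of $G_k$, which yields density of $\bigcup_k\Gamma_k$ in $G=\bigcup_k G_k$ with none of the $N_k$ truncation bookkeeping. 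In the separable case your greedy selection is just a constructive instance of this maximality argument, and the rest of your verification (separation, the boundary-distance bound, nestedness, the density computation) is correct.
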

\begin{proof}
	If $G=\emptyset$, let $\Gamma_k=\emptyset$ for all $k\in\mathbb N$.
	
	Assume $G\ne\emptyset$.
	Let $G_k=\set{x\in G\colon\dist_{X}(x,\partial Q)\geq 2^{-k}}$. 
	Since $G\subseteq\inter Q$, we have that $\bigcup_{k\ge1}G_k=G$. Let $n\ge1$ be the smallest index such that $G_n\ne\emptyset$.  Set $\Gamma_k=\emptyset$ for any $0\le k\le n-1$. For any $k\ge n$, let us make an inductive choice of $\Gamma_k\supseteq \Gamma_{k-1}$ to
	be a non-empty maximal $2^{-k}$-separated subset of $G_k$. 
	Since for any $k\ge n$ the set $\Gamma_k\ne\emptyset$ is a $2^{-k}$-net of $G_k$,  we  conclude that $\bigcup_{k\ge n}\Gamma_k=\bigcup_{k\ge 1}\Gamma_k$ is dense in $G$.	
\end{proof}
The following lemma is the final step allowing us to prove Theorem~\ref{thm:typical_nonsep}. It shows that every bounded linear operator $L$ with $\opnorm{L}<1$ behaves like a derivative of a typical $\lip_1$ function, at a typical point of $G$.
\begin{lemma}\label{lemma:BMgame1}
	Let $X$ and $Y$ be Banach spaces, $Q$ be a closed and bounded subset of $X$ with non-empty interior, $G\subseteq \inter Q$ and $L\in \mc{L}(X,Y)$ with $\opnorm{L}<1$. Then there is a residual subset $\mc{H}_{L}$ of $\lipone(Q,Y)$ such that for every $f\in \mc{H}_{L}$ the set
	\begin{equation*}
		\mc{P}_{L,f}:=\set{x\in G\colon L\in \mc{D}_{f}(x)}
	\end{equation*}
	is residual in $G$.
\end{lemma}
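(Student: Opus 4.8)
The plan is to prove both residuality claims simultaneously via a single application of the Banach--Mazur game characterisation of residual sets (Theorem~8.33 of \cite{kechris2012classical}, as recalled in subsection~\ref{sec:BMgame}). First I would reduce the statement to a countable family of open conditions. Using Lemma~\ref{lemma:Gamma1}, fix a sequence $(\Gamma_k)_{k\in\N}$ of nested $\delta_k$-separated subsets of $G$, each satisfying~\eqref{eq:Gamma-cond-1}, whose union is dense in $G$. For $f\in\lipone(Q,Y)$, $k,m\in\N$ and $x\in\Gamma_k$, say $f$ has the $(k,m,x)$-property if there is some $\alpha>0$ with $\sup_{u\in\cl B_X(0_X,\alpha)}\lnorm{Y}{f(x+u)-f(x)-Lu}\le \alpha/m$; equivalently $x$ witnesses, up to error $1/m$, that $L\in\mc{D}_f(x)$. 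I would then set, for each $k,m$,
\begin{equation*}
\mc{A}_{k,m}:=\set{f\in\lipone(Q,Y)\colon \text{the set }\set{x\in\Gamma_k\colon f\text{ has the }(k,m,x)\text{-property}}\text{ is }1/k\text{-dense in }\Gamma_k\text{ relative to }\cl{\Gamma_k}}
\end{equation*}
and prove each $\mc{A}_{k,m}$ is open and dense; here ``$1/k$-dense in $\Gamma_k$ relative to its closure'' means every point of $\cl{\Gamma_k}$ is within $1/k$ of such a witness. The intersection $\mc{H}_L:=\bigcap_{k,m}\mc{A}_{k,m}$ is then residual, and for $f\in\mc{H}_L$ the set $\mc{P}_{L,f}$ is residual in $G$: indeed for each $m$ the set $\set{x\in G\colon \exists\alpha>0,\ \sup_{\cl B_X(0,\alpha)}\lnorm{Y}{f(x+u)-f(x)-Lu}\le\alpha/m}$ contains, for every $k$, a $1/k$-dense subset of $\Gamma_k$, hence (since $\bigcup_k\Gamma_k$ is dense in $G$) a dense subset of $G$; a standard argument shows this set is also $G_\delta$-ish enough — more precisely one intersects over $m$ and checks the relevant set is a countable intersection of sets each containing a dense open subset of $G$.

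The heart of the matter is showing each $\mc{A}_{k,m}$ is dense (openness is comparatively soft: the $(k,m,x)$-property involves a strict-in-spirit inequality at a \emph{finite} separated set of points $x$, and since $\Gamma_k$ is uniformly separated and bounded away from $\partial Q$, small $\rho$-perturbations of $f$ preserve it at each relevant $x$; one must be slightly careful because $\Gamma_k$ may be infinite, but the $1/k$-density is required only relative to the compact set $\cl{\Gamma_k}$, so finitely many witnesses suffice and a uniform $\alpha$ can be extracted). For density, given $f_0\in\lipone(Q,Y)$ and $\eps>0$, I would choose a finite $1/k$-net $F=\{x_1,\dots,x_N\}$ of $\Gamma_k$ inside $\Gamma_k$, then apply Lemma~\ref{lemma:PlayerII_nonsep-1} with $r=\min(\eps,\text{something}<1-\opnorm{L})$ and with $\Gamma=F$ (a finite, hence uniformly separated, subset of $\inter Q$ satisfying~\eqref{eq:Gamma-cond-1}), obtaining $\alpha>0$ and $g\in\lipone(Q,Y)$ with $\supnorm{g-f_0}<\eps$ and $g(x+u)=g(x)+Lu$ for all $u\in\cl B_X(0_X,\alpha)$, $x\in F$. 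This $g$ lies in $\mc{A}_{k,m}$ for every $m$, since at each $x_j$ the quantity $\lnorm{Y}{g(x_j+u)-g(x_j)-Lu}$ vanishes identically for $\abs{u}\le\alpha$. Hence $\mc{A}_{k,m}$ is dense.

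The main obstacle I anticipate is bookkeeping around the non-compactness of $\Gamma_k$ and the fact that $G$ itself need not be closed or nice: one must be careful that ``residual in $G$'' is taken in the subspace topology on $G$, and that a countable intersection of sets that are ``dense $G_\delta$ in $G$'' really is dense $G_\delta$ in $G$ — which requires $G$, with its subspace topology, to be a Baire space. Since $G\subseteq\inter Q$ is an arbitrary subset of a Banach space it need not be Baire, so the cleanest route is to phrase everything as: $\mc{P}_{L,f}$ \emph{contains} a set which is a countable intersection of relatively-open dense subsets of $G$, and such a set is residual in $G$ by definition (being the complement of a countable union of nowhere-dense sets), independently of whether $G$ is Baire. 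Thus I would avoid invoking Baire category on $G$ and instead directly exhibit the $G_\delta$ dense structure: $\mc{P}_{L,f}\supseteq\bigcap_m\bigcup_{j\in\N}U_{m,j}$ where each $U_{m,j}=\set{x\in G\colon \sup_{u\in\cl B_X(0,1/j)}\lnorm{Y}{f(x+u)-f(x)-Lu}<\frac1{mj}}\cup(\text{correction term})$ is relatively open in $G$, and the density of each $\bigcup_j U_{m,j}$ follows from membership of $f$ in all the $\mc{A}_{k,m}$. Assembling this carefully, and then noting that Theorem~\ref{thm:typical_nonsep} follows by taking $W$ separable, choosing a dense sequence $(L_i)$ in $\set{L\in\Ball W\colon\opnorm L<1}$, and intersecting the residual sets $\mc{H}_{L_i}$ — using that $\mc{D}_f(x)$ is closed (and convex) so that containing a dense subset of $\Ball W$ forces containing all of $\Ball W$ — completes the argument.
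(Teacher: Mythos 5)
Your overall architecture --- replacing the Banach--Mazur game by a countable intersection of sets, each shown to be dense and open (or at least to contain a dense open set) --- is a genuinely different and in principle viable route, and your reduction of ``$\mc{P}_{L,f}$ is residual in $G$'' to dense relatively open layers $\bigcup_j U_{m,j}$ is essentially sound (including your correct observation that no Baire property of $G$ is needed). The genuine gap is in the two places where you lean on compactness or finiteness of $\Gamma_k$. A uniformly separated set is closed and discrete, so $\cl{\Gamma_k}=\Gamma_k$, and it is compact only when it is finite; in an infinite-dimensional $X$ the sets $\Gamma_k$ produced by Lemma~\ref{lemma:Gamma1} (maximal $2^{-k}$-separated subsets of a bounded set) are typically infinite, and moreover need not admit any \emph{finite} $1/k$-net (for instance $\set{e_i/2\colon i\in\N}$ in $\ell_2$ is $\tfrac{1}{\sqrt 2}$-separated and every $1/2$-net of it inside itself is the whole infinite set). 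This breaks both (a) your openness argument for $\mc{A}_{k,m}$ --- with a per-point $\alpha$ and infinitely many witnesses required, there is no uniform bound on $\rho(f,g)$ that preserves the non-strict inequality $\le\alpha/m$ at all of them --- and (b) your density argument, which applies Lemma~\ref{lemma:PlayerII_nonsep-1} only to a finite net $F$ of $\Gamma_k$, which may not exist.

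Both defects are repairable, and the repair is exactly what the paper's game-theoretic proof automates. Lemma~\ref{lemma:PlayerII_nonsep-1} accepts the \emph{whole} (possibly infinite) uniformly separated set $\Gamma_k$ and returns a \emph{single} $\alpha_k>0$ valid at every point of $\Gamma_k$ simultaneously; so you should build a uniform $\alpha$ and a strict margin into the definition of $\mc{A}_{k,m}$, e.g.\ require the existence of $\alpha>0$ such that $\sup_{u\in\cl{B}_{X}(0_{X},\alpha)}\lnorm{Y}{f(x+u)-f(x)-Lu}\le\alpha/(2m)$ for \emph{all} $x\in\Gamma_k$, and then note that every such $f$ has a $\rho$-ball of radius $\alpha/(4m)$ around it contained in the non-strict version of $\mc{A}_{k,m}$; thus $\mc{A}_{k,m}$ \emph{contains} a dense open set rather than being open itself, which suffices for residuality of the intersection. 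In the paper this uniformity is what Player~II exploits by choosing the radius $s_k<\alpha_k/k$ \emph{after} constructing $g_k$: the game formalism is precisely a device for letting the size of the protecting neighbourhood depend on the approximant. As written, your proof is essentially correct for finite-dimensional $X$ (where each $\Gamma_k$ is finite) but has a hole for general Banach spaces, which is the setting of the lemma.
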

\begin{proof}
	We assume that $G\ne \emptyset$.	
	Let 
	\begin{equation*}
		\mc{H}_{L}:=\set{f\in\lipone(Q,Y)\colon \text{ the set }{\mc{P}_{L,f}}\text{ is residual in $G$}}.
	\end{equation*}
	We prove that the set $\mc{H}_{L}$ is residual in $\lipone(Q,Y)$ by describing a winning strategy for Player~II in the relevant Banach-Mazur game in $\lipone(Q,Y)$ with the target $\mc{H}_L$,
	in which Player I’s choices are balls $B(f_k,r_k)$ and Player II’s choices
	are balls $B(g_k,s_k)$; see subsection~\ref{sec:BMgame} for details on the Banach-Mazur game.  Here and throughout the proof, given a mapping $\phi\in\lipone(Q,Y)$ and $\rho>0$ we abbreviate the notation $B_{\lipone(Q,Y)}(\phi,\rho)$, for the open ball in the space $\lipone(Q,Y)$ with centre $\phi$ and radius $\rho$, to $B(\phi,\rho)$. 
	
	Before the game starts, let Player~II prepare by fixing a nested sequence $(\Gamma_{k})_{k\in \N}$ of sets $\Gamma_k\subseteq \Gamma_{k+1}\subseteq G$, given by Lemma~\ref{lemma:Gamma1}. 
	
	Let $k\geq 1$ and let $f_{k}\in\lip_{1}(Q,Y)$ and $r_{k}>0$ denote the $k$-th move of Player~I. If $k\ge2$, then 
	\begin{equation*}
		{B}(f_{k},r_{k})\subseteq B(g_{k-1},s_{k-1}).
	\end{equation*}
	Since Player~II may always replace the radius $r_{k}$
	by any smaller radius $\tilde r_{k}>0$  
	we may assume that  
	\begin{equation}\label{eq:xk-fk}
		r_{k}\leq 2^{-k}(1-\opnorm{L})
		\quad\text{and}\quad 
		\cl{B}(f_{k+1},r_{k+1})\subseteq B(g_{k},s_{k})
		\text{ for every }
		k\geq 1.
	\end{equation}
	
	Player~II then responds by applying Lemma~\ref{lemma:PlayerII_nonsep-1} to find a mapping $g_{k}\in \lipone(Q,Y)$ and $\alpha_{k}\in(0,r_k)$ satisfying $\supnorm{g_k-f_k}<r_{k}$ and
	\begin{equation}\label{eq:gk_formula-1}
		g_{k}(x+u)=
		g_k(x)+Lu,\qquad  \text{whenever } 
		x\in\Gamma_k\,\text{ and }u\in\cl{B}_{X}(0_{X},\alpha_{k}).\\
	\end{equation}
	Finally, Player~II chooses 
	\begin{equation}\label{eq:sk1}
		0<s_{k}<\alpha_k/k
	\end{equation}
	small enough so that 
	\begin{equation}\label{eq:sk2}
		\cl{B}(g_{k},s_{k})\subseteq B(f_{k},r_{k}) 
	\end{equation}
	and declares $g_{k}\in\lipone(Q,Y)$ and $s_{k}>0$ as their $k$-th move.
	
	Due to the conditions~\eqref{eq:xk-fk}  and~\eqref{eq:sk2} the intersection
	\begin{equation*}
		\bigcap_{k=1}^{\infty}B(f_{k},r_{k})=\bigcap_{k=1}^{\infty}B(g_{k},s_{k})
	\end{equation*}
	is a singleton set containing only the Lipschitz mapping $g:=\lim_{k\to \infty}g_{k}\in\lipone(Q,Y)$.
	
	We define a sequence $(U_{k})_{k\in\N}$ of open sets $U_{k}\subseteq X$ by
	\begin{equation*}
		U_{k}:=\bigcup_{x\in\Gamma_{k}} B_{X}(x,s_{k})
	\end{equation*}
	and consider the set $J\subseteq G$ given by
	\begin{equation*}
		J:=G\cap \bigcap_{n=1}^{\infty}\bigcup_{k=n}^{\infty}U_{k}.
	\end{equation*}
	The set $J$ is clearly a relatively $G_{\delta}$ subset of $G$. Moreover, $\bigcup_{k\ge n}U_{k}\supseteq \bigcup_{k\ge n}\Gamma_{k}=\bigcup_{k\ge 1}\Gamma_{k}$, as $\Gamma_k$ are nested, and the latter is a dense subset of $G$ by Lemma~\ref{lemma:Gamma1}; thus $J\supseteq \bigcup_{k\ge 1}\Gamma_{k}$ is dense in $G$.
	We conclude that $J$ is a relatively residual subset of $G$.

	To complete the proof, we show that $L\in \mc{D}_{g}(x)$ for every $x\in J$. Let $x\in J$ and $\varepsilon>0$. Choose $k\in\N$ with $k\geq {4}/{\varepsilon}$ such that $x\in U_{k}$ and $\alpha_{k}<\varepsilon$. Let $x_{k}\in\Gamma_{k}$ be such that $x\in B_{X}(x_{k},s_{k})$; let  $u\in\cl{B}_{X}(0_{X},\alpha_{k})$ be arbitrary. Then, applying~\eqref{eq:gk_formula-1}, we get
	$g_{k}(x_{k}+u)=g_{k}(x_{k})+Lu$. 
	Using this identity, we derive
	\begin{multline*}
		\lnorm{Y}{g(x+u)-g(x)-Lu}\leq\lnorm{Y}{g(x+u)-g_{k}(x+u)}+\lnorm{Y}{g_{k}(x+u)-g_{k}(x_{k}+u)}\\
		+\lnorm{Y}{g_{k}(x_{k}+u)-g_{k}(x_{k})-Lu}+\lnorm{Y}{g_{k}(x_{k})-g_{k}(x)}+\lnorm{Y}{g_{k}(x)-g(x)}\\
		\leq  2\supnorm{g_{k}-g}+2\lnorm{X}{x_{k}-x}\leq 4
		s_k\leq \frac{4\alpha_{k}}{k},
	\end{multline*}	
	where the last inequality is due to Player~II's choice \eqref{eq:sk1} of $s_{k}$. This argument verifies
	\[
	\sup_{u\in\cl{B}_{X}(0,\alpha_{k})}\frac{\lnorm{Y}{g(x + u)-g(x)-Lu}}{\abs{\alpha_k}}
	\le\frac4k\leq \varepsilon
	.
	\]
	and subsequently $L\in\mc{D}_{g}(x)$. Since $x\in J$ was arbitrary, we conclude that the residual subset $J$ of $G$ is contained in $\mc{P}_{L,g}$. Hence, $g\in \mc{H}_{L}$ and Player~II wins the game.
\end{proof}
We are now ready to prove Theorem~\ref{thm:typical_nonsep}.
\begin{proof}[Proof of Theorem~\ref{thm:typical_nonsep}]
	Fix a dense sequence $(L_{n})_{n\in\N}$ in the closed unit ball $\Ball{W}$ with $\opnorm{L_{n}}<1$ for all $n\in\N$. 
	Let the sets $\mc{H}_{L_{n}}\subseteq \lipone(Q,Y)$ be given by the conclusion of Lemma~\ref{lemma:BMgame1}.
	Define a residual subset $\mc{F}$ of $\lipone(Q,Y)$ by
	\begin{equation*}
		\mc{F}:=\bigcap_{n\in\N}\mc{H}_{L_{n}}
	\end{equation*}
	and let $f\in\mc{F}$ be arbitrary.
	Let the sets $\mc{P}_{L_{n},f}$ be given by the conclusion of Lemma~\ref{lemma:BMgame1}. 
	Consider the residual subset $\mc{P}=\bigcap_{n\in\N}\mc{P}_{L_{n},f}$ of $G$
	and let $x\in\mc{P}$.	
	To complete the proof, we verify that $\mc{D}_{f}(x)\supseteq \Ball{W}$.  
	
	Let $L\in \Ball{W}$ be arbitrary and then extract a subsequence of $(L_{n})_{n\in\N}$ which converges to $L$. To simplify the notation, we will denote this subsequence also by $(L_{n})_{n\in\N}$. For each $n\in\N$ we have $x\in\mc{P}_{L_n,f}$ and so we may choose  $0<t_{n}\leq 2^{-n}$ such that $\sup_{u\in\cl{B}_X(0_{X},t_{n})}\lnorm{Y}{f(x+u)-f(x)-L_{n}u}\leq 2^{-n}{t_{n}}$. Then for every $n\in\N$ and $u\in\cl{B}_X(0_{X},t_{n})$ we have
	\begin{multline*}
		\lnorm{Y}{f(x+u)-f(x)-Lu}\leq \lnorm{Y}{f(x+u)-f(x)-L_{n}u}+\lnorm{Y}{L_{n}u-Lu}\\
		\leq (2^{-n}+\opnorm{L_{n}-L}){t_{n}}.
	\end{multline*}
	Thus, $L\in\mc{D}_{f}(x)$.
\end{proof}
\newpage
\bibliographystyle{plain}
\bibliography{biblio}

\noindent Michael Dymond\\Mathematisches Institut,
Universität Leipzig,
PF 10 09 02,
04109 Leipzig,
Deutschland.\\
\texttt{michael.dymond@math.uni-leipzig.de}

\noindent Olga Maleva\\
School of Mathematics,
University of Birmingham,
Birmingham,
B15 2TT,
United Kingdom.\\
\texttt{O.Maleva@bham.ac.uk}\\[3mm]

\end{document}